
\documentclass[journal]{IEEEtran}
\usepackage{hyperref}
\usepackage{pdfsync}
\usepackage{color}
\usepackage{amsthm}
\usepackage{amsmath}
\usepackage{graphicx}
\usepackage{caption}
\usepackage{cite}

\usepackage{mathtools}
\mathtoolsset{showonlyrefs}

\usepackage{amsmath,amsfonts,amssymb}
\usepackage{graphicx}%

\usepackage[normalem]{ulem}

\makeatletter
\newcommand*{\T}{%
  {\mathpalette\@T{}}%
}
\newcommand*{\@T}[2]{%
  \raisebox{\depth}{$\m@th#1\intercal$}%
}
\makeatother

\newtheorem{theorem}{Theorem}

\newtheorem{definition}[theorem]{Definition}
\newtheorem{lemma}[theorem]{Lemma}

\newtheorem{remark}[theorem]{Remark}
\newtheorem{example}[theorem]{Example}

\newtheorem{assumption}{Assumption}

\newtheorem{newassumption}{Assumption}




\def\vq{{\bf q}}   
\def\vu{{\bf u}}   \def\vx{{\bf x}}
\def\vy{{\bf y}} \def\vz{{\bf z}}

  \def\calC{\mathcal{C}}

 \def\calN{\mathcal{N}} 
  
\def\calS{\mathcal{S}}


\newcommand{\e}{\varepsilon}

\newcommand{\dt}{\, dt}
\newcommand{\ds}{\, ds}

\newcommand{\R}{\mathbb{R}}

\newcommand{\vxperp}{\vx_{nc}}

\newcommand{\ddt}{\frac{d}{dt}}

\newcommand{\dtau}{\,d\tau}
\newcommand{\myspan}{\textup{span}}

\newcommand{\C}{\mathcal{C}}

\newcommand{\diag}{\text{diag}}
\newcommand{\ns}{{n_s}}

\newcommand{\Hess}{\nabla^2}

\definecolor{darkgreen}{rgb}{0.1,0.6,0.1}

\title{\LARGE Distributed Gradient Flow:\\  Nonsmoothness, Nonconvexity, and Saddle Point Evasion}

\author{Brian Swenson,\IEEEauthorrefmark{2} Ryan Murray,\IEEEauthorrefmark{4} H. Vincent Poor,\IEEEauthorrefmark{2} and Soummya Kar\IEEEauthorrefmark{3}\thanks{The work of B. Swenson and H. V. Poor was partially supported by the
Air Force Office of Scientific Research under MURI Grant FA9550-18-1-0502.}}

\begin{document}

\maketitle

\renewcommand{\thefootnote}{\fnsymbol{footnote}}

\footnotetext[2]{Department of Electrical Engineering, Princeton University, Princeton, NJ (bswenson@princeton.edu, poor@princeton.edu).}
\footnotetext[4]{Department Mathematical Sciences, North Carolina State University, Raleigh, NC (rwmurray@ncsu.edu).}
\footnotetext[3]{Department of Electrical and Computer Engineering, Carnegie Mellon University, Pittsburgh, PA (soummyak@andrew.cmu.edu).}

\renewcommand{\thefootnote}{\arabic{footnote}}

\thispagestyle{empty}
\begin{abstract}
The paper considers \emph{distributed} gradient flow (DGF) for multi-agent nonconvex optimization. DGF is a continuous-time approximation of distributed gradient descent that is often easier to study than its discrete-time counterpart. The paper has two main contributions.  First, the paper considers optimization of nonsmooth, nonconvex objective functions. It is shown that DGF converges to critical points in this setting. The paper then considers the problem of avoiding saddle points. It is shown that if agents' objective functions are assumed to be smooth and nonconvex, then DGF can only converge to a saddle point from a zero-measure  set of initial conditions. To establish this result, the paper proves a stable manifold theorem for DGF, which is a fundamental contribution of independent interest. In a companion paper, analogous results are derived for discrete-time algorithms.
\end{abstract}

\begin{IEEEkeywords}
Distributed optimization, nonconvex optimization, nonsmooth optimization, gradient flow, gradient descent, saddle point, stable manifold
\end{IEEEkeywords}


\section{Introduction} \label{sec:intro}
In this paper we are interested in multi-agent algorithms for optimizing the function
\begin{equation} \label{eqn:f-def}
f(x)\coloneqq \sum_{n=1}^N f_n(x),
\end{equation}
where $N$ denotes the number of agents, and $f_n:\R^d\to\R$ represents a private function available only to agent $n$. Agents are assumed to be equipped with some communication graph that may be used to exchange information with neighboring agents. 
We will consider the behavior of \emph{distributed gradient flow} (DGF)---a multi-agent version of classical (centralized) gradient flow, formally defined in \eqref{dynamics_CT} below---for optimizing \eqref{eqn:f-def} when each $f_n$ is permitted to be nonconvex and possibly nonsmooth. 

Problems of the form \eqref{eqn:f-def}, particularly with nonconvex objectives, arise in numerous applications \cite{di2016next,zhu2012approximate,hong2017prox}. Of particular recent interest, problems of this form are ubiquitous in distributed machine learning and training of deep neural networks \cite{jain2017non,lian2017can}. In practice, first-order methods such as (discrete-time) gradient descent, and (continuous-time) gradient flow are indispensable tools in handling such problems. In large-scale multi-agent settings where information is not centrally available, it is necessary to utilize distributed variants of these processes. 


The paper has two main contributions. First, we consider convergence to critical points of \eqref{eqn:f-def} when objectives are nonconvex and nonsmooth.\footnote{Through the entire paper we allow for nonconvex objectives. However, in later results we will make some smoothness assumptions.}  Nonsmooth objectives frequently arise in practice---for example,  $\ell_1$ regularization is commonly employed to avoid overfitting, and in the context of neural networks, nonsmooth ReLU activation functions are often preferred by practitioners (which in turn lead to nonsmooth nonconvex objective functions) \cite{goodfellow2016deep}.
The first main contribution will be to show that DGF converges to critical points of \eqref{eqn:f-def} in this setting (Theorem \ref{thrm:cont-conv-cp}). 
Formally, the only assumptions we will make on the objective for this result are Assumptions \ref{a:Lip-fn}, \ref{a:coercive}, \ref{a:critical-points}, and \ref{a:chain-rule} below.
These assumptions are quite broad---among other things, they encompass a wide range of data science applications, including popular (nonsmooth) neural network architectures (cf. \cite{davis2020stochastic}). 
To the best of our knowledge, these are the weakest assumptions on the objective function for which DGF, or more generally, any distributed first-order optimization process is currently known to converge to critical points for nonsmooth, nonconvex objectives. A more detailed discussion of related work can be found in Section \ref{sec:lit-review}.


In applications of nonconvex optimization, it is often sufficient to compute local minima. Up to this point, we have only discussed convergence to critical points, which allows for the possibility of convergence to a saddle point (rather than a local minimum). Characterizing the behavior of optimization dynamics near saddle points is a challenging issue---a serious shortcoming of  current literature on distributed first-order algorithms is that most results can only ensure convergence to critical points.
Our second main contribution will be to show that convergence to saddle points of \eqref{eqn:f-def} is ``atypical'' behavior for DGF. In particular, we will see that if we assume a degree of smoothness near saddle points, we can establish a stable-manifold theorem for DGF (Theorem \ref{thrm:non-convergence-saddles}). The stable-manifold theorem for DGF is a powerful result with many important consequences. A simple and immediate consequence is that, if functions are assumed to be globally smooth,  then saddle points can only be reached from a zero-measure set of initial conditions (Theorem \ref{thrm:ae-convergence})---stated in other words, if a DGF process is randomly initialized, then the probability of converging to a saddle point is zero.\footnote{Global smoothness is not required to obtain nonconvergence to saddle points. However, it simplifies the discussion, as pathological cases arise when objective functions lack global smoothness. A more nuanced discussion can be found above Theorem \ref{thrm:ae-convergence}.
Also, here we implicitly assume a random initialization with distribution that is absolutely continuous with respect to the Lebesgue measure.}

The classical stable-manifold theorem is a canonical result from dynamical systems theory that characterizes the behavior of autonomous nonlinear systems near hyperbolic equilibrium points \cite{chicone2006ordinary}.\footnote{An equilibrium point is said to be hyperbolic if the Jacobian of the vector field is invertible at the equilibrium point.} 
Informally, the classical stable-manifold theorem tells us the following for centralized first-order optimization dynamics: \emph{Typical saddle points can only be reached from some smooth low-dimensional (zero-measure) surface.}

It is, of course, a well established fact that centralized gradient flows do not typically converge to saddle points, and this fact is a direct consequence of the classical stable-manifold theorem (see Section \ref{sec:lit-review} for references). Unfortunately, in distributed settings the classical stable-manifold theorem is not generally applicable. Hence, our understanding of saddle point nonconvergence in these settings is far less clear. This paper seeks to address this issue by establishing a stable-manifold theorem for DGF. 

We emphasize that, in order to show convergence to critical points (contribution 1), we will not require functions to be smooth. However, to establish nonconvergence to saddle points and the stable-manifold theorem for DGF (contribution 2) we \emph{will} require at least local smoothness near the saddle point. (Intuitively, linearization lies at the heart of the stable-manifold theorem, and it is not clear how to linearize without smoothness.)

In the following section we formally present the main results of the paper. 
\subsection{Setup and Main Results} \label{sec:main-results}
Throughout the paper we will make the following assumption. 
\begin{assumption} \label{a:Lip-fn}
$f_n$ is locally Lipschitz continuous. 
\end{assumption}
Note that while we have not assumed $f_n$ to be differentiable, under Assumption \ref{a:Lip-fn}, 
the derivative of $f_n$ exists almost everywhere. This is a consequence of Rademacher's theorem \cite{evans2015measure}. 
In order to define a distributed gradient-descent process for \eqref{eqn:f-def} satisfying Assumption \ref{a:Lip-fn}, we will consider the following notion of a generalized gradient 
\cite{clarke2008nonsmooth}.
\begin{definition} \label{def:gen-grad}
Given a locally Lipschitz continuous function $g:\R^m\to\R$, the generalized gradient of $g$ is given by 
$$
\partial g(x) := \text{co}\left\{\lim_{i \to \infty} \nabla g(x_i): x_i\to x,\, \nabla g(x_i) \text{ exists}  \right\},
$$
where $\nabla g(x)$ is the classical gradient of $g$ and  $\text{co}\{\cdot\}$ indicates the convex hull.
\end{definition}
When $g$ is locally Lipschitz, $\partial g(x)$ is a nonempty compact convex set for all $x\in\R^d$ \cite{clarke2008nonsmooth}. 
If $g$ is continuously differentiable, then $\partial g$ is a singleton and coincides with the usual notion of the gradient. If $g$ is convex, then $\partial g$ coincides with the subgradient of $g$. Further discussion of generalized gradients in the context of control and discontinuous systems can be found in \cite{cortes2008discontinuous}.

We will assume that agents are endowed with some communication graph $G=(V,E)$ over which they may exchange information with neighboring agents. 
Here, the set of vertices $V=\{1,\ldots,N\}$ represents the set of agents and an edge $(i,j)\in E$ between vertices represents the ability of two agents to exchange information. We will assume the following.
\begin{assumption} \label{a:G-connected-undirected}
The graph $G=(V,E)$ is undirected, unweighted, and connected.
\end{assumption}

Let $\vx_n(t)$ denote the state of agent $n$ at time $t$---this may be thought of as an estimate of an optimizer of \eqref{eqn:f-def} held by agent $n$ at time $t$. 
The DGF process we study in this paper is given by\footnote{Because we consider gradient descent with respect to the generalized gradient, which can be a set, we must consider DGF as a differential \emph{inclusion} rather than an ordinary differential \emph{equation} (ODE). We will consider solutions to differential inclusions in the sense given after \eqref{eqn:generic-ode} below. A primer on differential inclusions can be found in \cite{cortes2008discontinuous} and a more detailed treatment in \cite{aubin2012differential}.}
\begin{equation} \label{dynamics_CT}
\dot \vx_n(t) \in \beta_t \sum_{\ell \in \Omega_n} (\vx_\ell(t) -\vx_n(t)) - \alpha_t \partial f_n(\vx_n(t)),
\end{equation}
where $\alpha_t$ and $\beta_t$ are scalar weight parameters and $\Omega_n$ is the set of neighbors of agent $n$ in the graph $G$. 
The update in \eqref{dynamics_CT} is a continuous-time generalized gradient version of consensus+innovations~\cite{kar2013distributed} and the related class of diffusion~\cite{chen2012diffusion} and distributed gradient descent (DGD)~\cite{nedic2009distributed} processes for distributed optimization. Note that when each $f_n$ is convex, this reduces to a distributed subgradient-descent process, and when each $f_n$ is continuously differentiable, this becomes a standard ODE. 
We emphasize that under Assumption \ref{a:Lip-fn}, the differential inclusion \eqref{dynamics_CT} is well posed since $\partial f_n(x)$ is nonempty, compact and convex \cite{aubin2012differential}. We also emphasize that the process is \emph{distributed} since the dynamics of agent $n$ only depend on locally available information. 

The process \eqref{dynamics_CT} may be intuitively interpreted as follows. The first term on the right-hand side of \eqref{dynamics_CT} is a \emph{consensus term} that draws agents' states closer together, while the second term is a \emph{descent} term that encourages agents to descend their private objective function. In particular, note that if we set $\partial f_n\equiv 0$, then \eqref{dynamics_CT} reduces to a standard continuous-time consensus algorithm \cite{dimakis2010gossip,jadbabaie2003coordination}. 

The first main result of this paper is that under the dynamics \eqref{dynamics_CT}, agents attain consensus and converge to the set of critical points of $f$. Given that $f$ may be nonsmooth, the notion of a critical point is defined as follows \cite{clarke2008nonsmooth}. 
\begin{definition}
We say that $x\in\R^d$ is a critical point of $f$ if $0\in \partial f(x)$.
\end{definition}
Note, of course, that if $f$ is smooth, this generalizes the classical case where a critical point satisfies $\nabla f(x) = 0$, and if $f$ is convex, then this reduces to the standard first-order optimality condition for the subgradient of a convex function. 

To ensure convergence to critical points, we will make a few additional assumptions. First, we will assume that agents' private functions are coercive in the following sense.
\begin{assumption} \label{a:coercive}
$f_n$ is coercive, i.e., $f_n(x)\to \infty$ as $\|x\|\to\infty$.
\end{assumption}
This assumption is relatively weak in the sense that it need only hold asymptotically and does impose any constraints on the rate at which $f_n(x)\to\infty$. 
Under Assumptions \ref{a:Lip-fn} and \ref{a:coercive}, the set of critical points of $f$ is nonempty. 

Next, we assume that the set of \emph{critical values} (the image of the set of critical points) is a ``small'' set. We recall that a set $S\subset \R$ is said to be dense in $\R$ if for each point $x\in \R$ there exists a sequence in $S$ converging to $x$. 
\begin{assumption} \label{a:critical-points}
Let $\text{CP}_f\subset \R^d$ denote the set of critical points of $f$. The set $\R\backslash f(\text{CP}_f)\subset \R$ is a dense set in $\R$.  
\end{assumption}
Note that $f(\text{CP}_f)$ is the set of critical values of $f$, so the assumption stipulates that the set of non-critical values of $f$ is dense in $\R$.
This assumption is relatively weak, and is standard in stochastic approximation literature  \cite{benaim2005stochastic,davis2020stochastic,bianchi2012convergence}. The assumption is satisfied if $f(\text{CP}_f)$ is a zero measure set. Thus, for example, by the well-known theorem of Sard \cite{hirsch2012differential}, the assumption holds whenever $f$ is $d$-times continuously differentiable. The assumption also holds in a wide range of other circumstances of practical interest involving nonsmooth objective functions \cite{davis2020stochastic}.

In the context of smooth optimization, it is trivial to see that if $f$ is smooth and $\vx(t)$ is a gradient flow trajectory, then
\begin{align} \label{eq:descent-smooth}
\ddt f(\vx(t)) & = \langle \nabla f(\vx(t)), \ddt \vx(t)\rangle\\
& = -\|\nabla f(\vx(t))\|^2,
\end{align}
where the first equality follows from the chain rule. This relationship makes clear the critical fact that $f(\vx(t))$ decreases along the trajectory of $\vx(t)$ unless at a critical point. 

In the context of nonsmooth optimization, the key relationship \eqref{eq:descent-smooth} is no longer obvious or trivial. 
To ensure that such a property holds, we must make the following assumption. 
\begin{assumption}[Chain rule] \label{a:chain-rule}
For any absolutely continuous function $\vx:[0,\infty)\to \R^{d}$,
$f$ satisfies the chain rule
$$
\ddt f(\vx(t)) = \langle v, \ddt \vx(t) \rangle, 
$$
for some $v\in \partial f(\vx(t))$, and almost all $t\geq 0$. 
\end{assumption}
This assumption is quite broad, and examples where the assumption fails to hold are typically pathological \cite{daniilidis2020pathological}. 
The problem of identifying explicit function classes for which this assumption holds was studied in 
\cite{davis2020stochastic,drusvyatskiy2015curves} where it was shown that the assumption holds for a broad class of functions (namely, those that are subdifferentiably regular or Whitney stratifiable \cite[Sec. 5]{davis2020stochastic}) that includes popular nonsmooth deep learning architectures as a special case.

Finally, to simplify the analysis, we will assume that the weight parameters $\alpha_t$ and $\beta_t$ take the following form.\footnote{To emphasize that $\alpha_t$ and $\beta_t$ are scaling parameters, and to reduce notational clutter, we have placed the time argument for these in subscripts.}
\begin{assumption} \label{a:step-size-CT}
$\alpha_t = \Theta(t^{-\tau_\alpha})$ and $\beta_t = \Theta( t^{-\tau_\beta})$, with $0\leq \tau_\beta < \tau_\alpha \leq 1$.
\end{assumption}
We note that in the above assumption we use the notation $g(t) = \Theta(h(t))$ to indicate that for some constants $c_1,c_2>0$ we have $c_1 h(t) \leq g(t) \leq c_2h(t)$ for all $t\geq 0$ sufficiently large.

The first main result of the paper is the following, which states that agents reach asymptotic consensus and converge to critical points of \eqref{eqn:f-def}.
\begin{theorem}[Convergence to Critical Points] \label{thrm:cont-conv-cp}
Suppose $\{\vx_n(t)\}_{n=1}^N$ is a solution to \eqref{dynamics_CT} with arbitrary initial condition and suppose that Assumptions \ref{a:Lip-fn}--\ref{a:step-size-CT} hold.  Then for each $n=1,\ldots,N$ we have
\begin{enumerate}
  \item [(i)] $\lim_{t\to\infty} \|\vx_n(t) - \vx_\ell(t)\| = 0$, for all $\ell = 1,\ldots,N$.
  \item [(ii)] $\vx_n(t)$ converges to the set of critical points of $f$.
\end{enumerate}
\end{theorem}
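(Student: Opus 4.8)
### Proof Proposal

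The plan is to exploit the two-timescale structure encoded in Assumption \ref{a:step-size-CT} (namely $\tau_\beta < \tau_\alpha$, so the consensus term dominates the descent term asymptotically), and to combine this with a Lyapunov/energy argument built around the chain rule of Assumption \ref{a:chain-rule}. I would organize the analysis by decomposing each agent's state into an average component and a disagreement (consensus-error) component. Write $\vxavg(t) := \frac{1}{N}\sum_{n=1}^N \vx_n(t)$ for the instantaneous average, and let $\vxperp$ denote the stacked disagreement vector, i.e.\ the projection of the stacked state onto the subspace orthogonal to the consensus subspace $\myspan\{\ones\}$. Part (i) is a statement about $\vxperp(t)\to 0$, while part (ii) is a statement about $\vxavg(t)$ approaching $\mathrm{CP}_f$ once consensus has (asymptotically) been achieved.

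For part (i), the first step is to derive a differential inequality for the disagreement energy $V(t) := \frac{1}{2}\|\vxperp(t)\|^2$. Differentiating along the dynamics \eqref{dynamics_CT}, the consensus term contributes a strongly contracting piece: since $G$ is connected (Assumption \ref{a:G-connected-undirected}), the graph Laplacian restricted to the disagreement subspace has spectral gap $\lambda_2 > 0$, yielding a term of order $-\beta_t \lambda_2 \|\vxperp\|^2$. The descent term is a bounded perturbation, because local Lipschitzness (Assumption \ref{a:Lip-fn}) together with coercivity (Assumption \ref{a:coercive}) will be used to establish that trajectories remain in a compact set, whence $\partial f_n$ is uniformly bounded along the trajectory; this contributes a perturbation of order $\alpha_t$. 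The resulting inequality has the schematic form $\dot V \lesssim -\beta_t \lambda_2 V + C\alpha_t \sqrt{V}$, and because $\alpha_t/\beta_t = \Theta(t^{-(\tau_\alpha-\tau_\beta)}) \to 0$, a Gr\"onwall-type or comparison argument forces $V(t)\to 0$. This is the heart of the two-timescale separation and delivers part (i).

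For part (ii), the strategy is to show that $\vxavg(t)$ behaves asymptotically like a (time-scaled) centralized gradient-flow trajectory of $f$ plus a vanishing error, and then invoke a LaSalle-type invariance principle. Averaging \eqref{dynamics_CT} over $n$ annihilates the consensus term entirely (it lies in the range of the Laplacian), leaving $\dot\vxavg(t) \in -\frac{\alpha_t}{N}\sum_n \partial f_n(\vx_n(t))$. Using part (i), each $\vx_n(t)$ is close to $\vxavg(t)$, so by upper semicontinuity of the generalized gradient this right-hand side is asymptotically contained in $-\frac{\alpha_t}{N}\partial f(\vxavg(t))$ up to a vanishing perturbation. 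I would then reparametrize time by $s = \int_0^t \alpha_r\,\dr$ (which diverges since $\tau_\alpha \le 1$) to absorb the step-size, casting $\vxavg$ as an asymptotic pseudotrajectory of the differential inclusion $\dot y \in -\partial f(y)$. Here Assumption \ref{a:chain-rule} supplies the descent property: along this limiting flow, $\frac{d}{ds} f(y(s)) = \langle v, \dot y(s)\rangle = -\|v\|^2 \le 0$ for some $v\in\partial f$, so $f$ is a strict Lyapunov function off the critical set. Coercivity again keeps things bounded, and Assumption \ref{a:critical-points} (density of noncritical values) rules out the pathological possibility that the trajectory's limit set straddles a positive-measure plateau of critical values, which is exactly what one needs to conclude convergence to $\mathrm{CP}_f$ rather than mere accumulation.

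The main obstacle I anticipate is making the "asymptotic pseudotrajectory / LaSalle" step rigorous in the nonsmooth, time-varying, coupled setting. Three technical difficulties overlap here: (a) controlling the cross-term interaction between the consensus error and the descent dynamics carefully enough that the part-(i) decay feeds cleanly into part (ii) without circular dependence; (b) justifying that the perturbed differential inclusion for $\vxavg$ is genuinely an asymptotic pseudotrajectory of $-\partial f$, which requires uniform (not just pointwise) control of the gradient perturbation on compact time windows and relies on upper semicontinuity and local boundedness of $\partial f$; and (c) invoking the correct version of the Lyapunov/invariance theorem for differential inclusions (in the spirit of the stochastic-approximation results of \cite{benaim2005stochastic}) and verifying that its hypotheses—precompactness of the trajectory, the chain-rule descent property, and the smallness of critical values—are all met. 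I expect the bulk of the real work to be in (b), since the nonsmoothness means the naive chain-rule computation \eqref{eq:descent-smooth} is unavailable and must be replaced throughout by the set-valued reasoning licensed by Assumption \ref{a:chain-rule}.
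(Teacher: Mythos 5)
Your proposal follows essentially the same route as the paper: part (i) via a Lyapunov/comparison argument on the disagreement energy using the dominance of the consensus term ($\alpha_t/\beta_t\to 0$) and compactness from coercivity, and part (ii) by a time change, viewing the consensus-subspace component as a perturbed trajectory of the differential inclusion $\dot y\in-\partial f(y)$ whose shifted tails converge to genuine solutions, and then using $f$ as a Lyapunov function together with the chain rule and the density of noncritical values to force convergence of $f$ along the trajectory and hence convergence to the critical set. The paper carries this out in a slightly more abstract subspace-constrained formulation and proves the pseudotrajectory/limit-flow step explicitly via Arzela--Ascoli, weak $L^2$ compactness, and Mazur's theorem, but these are the same ingredients you identify in your obstacles (a)--(c).
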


Next, we consider the problem of avoiding saddle points. We will approach this problem by establishing a stable-manifold theorem for DGF.
Up to now, we have allowed for functions with discontinuous gradients and shown convergence to critical points. However, in order to understand nonconvergence to saddle points and establish a stable-manifold theorem for DGF we will make some assumptions about the smoothness of agents' functions. 

We say that $x^*\in \R^d$ is a saddle point of $f$ if $0\in\partial f(x)$ and $x^*$ is neither a local maximum or minimum.
Formally, given a saddle point $x^*$ we will assume the following. 
\begin{assumption} \label{a:C2}
Each $f_n$ is twice continuously differentiable in a neighborhood of $x^*$. 
\end{assumption}
We note that this assumption allows for applications where the objective function may be nonsmooth, 
so long as the saddle point of interest does not occur precisely at a point of gradient discontinuity.

Under Assumption \ref{a:C2}, we will consider saddle points satisfying the following notion of regularity, where we use $\nabla^2 f(x)$ to denote the Hessian of $f$ at $x$. 
\begin{definition} [Nondegenerate or Regular Saddle Point] \label{def:strict saddle}
A saddle point $x^*$ of $f$ will be said to be \emph{nondegenerate} (or \emph{regular}) if the Hessian $\Hess f(x^*)$ is nonsingular.
\end{definition}
The term nondegenerate is standard for this concept in optimization. However, since we will deal with \emph{nonconvergence} to these points, we will generally prefer to use the term ``regular'' to avoid frequent use of double negatives.

We will also require the following assumption, which is quite mild but somewhat technical.
\begin{assumption}[Continuity of Eigenvectors] \label{a:eigvec-continuity}
Suppose $x^*\in\R^d$ is a saddle point of \eqref{eqn:f-def}.
For each $n$, the eigenvectors of $\Hess f_n(x)$ are continuous at $x^*$ in the sense that, for each $x$ in a neighborhood of $x^*$, there exists an orthonormal matrix $U_n(x)$ that diagonalizes $\Hess f_n(x)$ such that $x\mapsto U_n(x)$ is continuous at $x^*$.
\end{assumption}
This assumption is required to rule out certain pathological cases that can arise in the distributed setting.
The assumption is mild and should be satisfied by most functions encountered in practice. (See Example \ref{example-eigval-problem} and related discussion below.) The assumption is guaranteed to hold if each $f_n$ is analytic or if, for each $n$, the Hessian of $f_n$ has no repeated eigenvalues \cite{katoBook}.

Our second main result, stated next, establishes the existence of stable manifolds for DGF. Informally, the theorem states that, in a neighborhood of a regular saddle point, a DGF process $\vx(t) = (\vx_n(t))_{n=1}^N$ can only converge to the saddle point if it is initialized on some special low-dimensional surface.
\begin{theorem} [Stable-Manifold Theorem for DGF] \label{thrm:non-convergence-saddles}
Suppose that $x^*\in\R^{d}$ is a regular saddle point of $f$ and Assumptions \ref{a:G-connected-undirected} and
\ref{a:step-size-CT}--\ref{a:eigvec-continuity}
are satisfied. 
Let $\tilde x = (x^*,\ldots,x^*)\in \R^{Nd}$ be the $N$-fold repetition of $x^*$
Let $q$ denote the number of negative eigenvalues of $\nabla^2 f(x^*)$. Then there exists a neighborhood $\calN\subset\R^{Nd}$ containing $\tilde x$ such that the following holds: For any $t_0\in \R$, let $S_{t_0}$ denote the set of all $x_0\in\calN$ such that $\vx(t)\to \tilde x$ when $\vx(t_0) = x_0$. Then $\calS_{t_0}$ is a smooth (continuously differentiable) $(Nd-q)$-dimensional manifold. 
\end{theorem}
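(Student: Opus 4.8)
The plan is to reduce the statement to the classical stable-manifold theorem for a hyperbolic fixed point by writing the DGF dynamics \eqref{dynamics_CT} as a nonautonomous system on $\R^{Nd}$ and linearizing at $\tilde x$. Stacking the agents' states into $\vx = (\vx_n)_{n=1}^N$, under Assumption \ref{a:C2} the descent term is smooth near $\tilde x$, so near the saddle the inclusion \eqref{dynamics_CT} becomes a genuine ODE $\dot \vx = -\beta_t (L\otimes I_d)\vx - \alpha_t \, F(\vx)$, where $L$ is the graph Laplacian and $F(\vx) = (\nabla f_n(\vx_n))_n$. First I would linearize: the Jacobian of the vector field at $\tilde x$ is $J_t = -\beta_t(L\otimes I_d) - \alpha_t H$, where $H = \myDiag(\Hess f_n(x^*))_{n=1}^N$ is the block-diagonal stacking of the individual Hessians. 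The key structural fact is that $L\otimes I_d$ is positive semidefinite with kernel exactly the consensus subspace $\{(v,\ldots,v):v\in\R^d\}$, and on that subspace $H$ restricts to $\Hess f(x^*) = \sum_n \Hess f_n(x^*)$.

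The main technical issue is the nonautonomy: by Assumption \ref{a:step-size-CT}, $\beta_t/\alpha_t = \Theta(t^{\tau_\alpha - \tau_\beta}) \to \infty$, so the consensus term dominates the descent term at rate $t^{\tau_\alpha-\tau_\beta}$. The strategy is to exploit this time-scale separation: as $t\to\infty$, the dynamics transverse to the consensus subspace are strongly contracted by the Laplacian, while the dynamics tangent to the consensus subspace are governed by $-\alpha_t \Hess f(x^*)$. Intuitively, the $(N-1)d$ directions orthogonal to consensus are uniformly stable (the Laplacian's positive eigenvalues, scaled by $\beta_t$, drive exponential contraction), while the $d$ consensus directions split according to the sign of the eigenvalues of $\Hess f(x^*)$: $d-q$ stable directions and $q$ unstable directions. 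Summing, the stable manifold should have dimension $(N-1)d + (d-q) = Nd - q$, matching the claim. Assumption \ref{a:eigvec-continuity} is what lets me simultaneously (and continuously) diagonalize the $\Hess f_n$ near $x^*$ so that the stable and unstable subspaces of the combined operator $\beta_t(L\otimes I_d)+\alpha_t H$ vary continuously and do not degenerate — this is the pathology the assumption rules out.

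To make the time-scale separation rigorous, I would pass to a rescaled time variable that normalizes $\alpha_t$ (e.g.\ $ds = \alpha_t\,dt$), converting the system into one whose linear part is $-(L\otimes I_d)\beta_t/\alpha_t - H$. Since $\beta_t/\alpha_t \to \infty$, I would treat this as a singularly perturbed / asymptotically autonomous hyperbolic system and invoke a nonautonomous stable-manifold theorem (of Hadamard--Perron type, as in \cite{chicone2006ordinary}) that guarantees, for a system whose linearization is uniformly hyperbolic with a fixed-dimensional unstable subspace for all large $t$, the existence of a smooth local stable manifold whose dimension equals the number of stable directions. The hyperbolicity I need is precisely that $\Hess f(x^*)$ is nonsingular (regularity of the saddle, Definition \ref{def:strict saddle}), which ensures the $d$ consensus directions contain no center directions; the transverse directions are automatically hyperbolic because the nonzero Laplacian eigenvalues are bounded away from zero and amplified by $\beta_t$.

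The hard part will be the nonautonomous hyperbolicity estimate: I must show that the time-varying linear flow generated by $J_t$ admits an exponential dichotomy with a $q$-dimensional unstable part and an $(Nd-q)$-dimensional stable part, uniformly for $t \ge t_0$ with $t_0$ large, and that the unstable subspace does not rotate pathologically as $t\to\infty$. Because $\beta_t$ and $\alpha_t$ decay (possibly to zero), the contraction rates are not constant, so I cannot directly quote the constant-coefficient dichotomy; instead I would establish an \emph{integral} dichotomy controlling $\int^t \alpha_s\,ds$ and $\int^t \beta_s\,ds$, using Assumption \ref{a:step-size-CT} to guarantee both integrals diverge (so contraction/expansion accumulate without bound) and that the consensus/transverse splitting is preserved up to vanishing perturbations. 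Once the dichotomy and the continuity of the splitting (via Assumption \ref{a:eigvec-continuity}) are in place, the Lyapunov--Perron fixed-point construction yields the smooth $(Nd-q)$-dimensional manifold $\calS_{t_0}$, and smoothness follows from the smoothness of $F$ near $\tilde x$ together with the standard contraction-on-a-function-space argument.
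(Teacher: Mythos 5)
Your overall strategy---time-rescaling so that the consensus term carries a diverging weight $\gamma_t=\beta_t/\alpha_t$, splitting $\R^{Nd}$ into the consensus subspace and its complement, counting $(N-1)d$ transversally stable directions plus $d-q$ stable consensus directions, and then running a Lyapunov--Perron fixed-point construction under a nonautonomous exponential dichotomy---is the same route the paper takes (Section \ref{sec:CT-stable-manifold}, Lemmas \ref{lemma:stable-man-exist} and \ref{lemma:manifold-smoothness}). However, there is a genuine gap at the very first step: you propose to ``linearize at $\tilde x$,'' but $\tilde x=(x^*,\ldots,x^*)$ is \emph{not} an equilibrium of the frozen-time vector field $-\nabla h(x)-\gamma Qx$ with $h(x)=\sum_n f_n(x_n)$ and $Q=L\otimes I_d$. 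Indeed $Q\tilde x=0$ and $\nabla h(\tilde x)=(\nabla f_n(x^*))_{n=1}^N$, and only the \emph{sum} $\sum_n\nabla f_n(x^*)$ vanishes at a saddle of $f$; the individual gradients generically do not. Expanding about $\tilde x$ therefore leaves a zeroth-order forcing term $-(\nabla f_n(x^*))_n$ that is $O(1)$ in the rescaled time and does not shrink as the state approaches $\tilde x$, so the ``nonlinearity'' in your Lyapunov--Perron equation is not small near the origin and the contraction argument as you describe it does not close; the trajectories of interest hover near a point displaced from $\tilde x$ at every finite time.

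The paper repairs this with Lemma \ref{lemma:g-existence}: an implicit-function-theorem argument produces, for each large $\gamma$, a genuine critical point $g(\gamma)$ of the penalized function $h(x)+\gamma\frac12 x^\T Qx$, with $g(\gamma)\to\tilde x$ and---crucially---finite arc length $\int_{\gamma_0}^\infty|g'(s)|\,ds<\infty$ as in \eqref{eq:g-bdd-arc-length}. One then recenters via $\vy(t)=\vx(t)-g(\gamma_t)$, which replaces the non-decaying forcing by the drift $-g'(\gamma_t)\dot\gamma_t$ of the moving quasi-equilibrium; the finite-arc-length bound is exactly what makes this drift integrable against the dichotomy kernels $V^s,V^u$ in the integral equation \eqref{eq:integral-eq0}. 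Your dimension count, your use of Assumption \ref{a:eigvec-continuity} to keep the time-varying diagonalization continuous, and your dichotomy estimates in terms of $\int\alpha_s\,ds$ are all consistent with the paper's argument (cf.\ Lemma \ref{lemma:eigvalue-convergence} for the eigenvalue asymptotics), but without the recentering step the construction is anchored at the wrong point and the fixed-point argument fails.
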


In the above theorem, when we say that $\calS_{t_0}$ is a $C^1$ manifold with dimension $Nd-q$ we mean that $\calS_{t_0}$ is the graph of a $C^1$ function over a $(Nd-q)$-dimensional domain.
We note that in classical settings, the stable manifold does not depend on time. However, because DGF is a nonautonomous system (since $\alpha_t$ and $\beta_t$ are both time-varying), the stable manifold here is time-dependent.

Because we have only assumed local smoothness, the stable manifold theorem above is a local result. In particular, given a regular saddle point $x^*$, it immediately implies that for  almost all initializations in a neighborhood of the saddle point, DGF does not converge to $x^*$. 
However, a challenging aspect of discontinuous dynamical systems such as \eqref{dynamics_CT} is that they can concentrate sets with positive volume into zero measure sets in finite time \cite{cortes2008discontinuous}. Thus, when $f_n$ is nonsmooth, we cannot claim in general that, as a consequence of Theorem \ref{thrm:non-convergence-saddles}, the set of initial conditions in all of $\R^{Nd}$ such that $\vx_n(t)\to x^*$ for some $n$, has measure zero.\footnote{This is because it could occur that the right hand side of \eqref{dynamics_CT} concentrates precisely into the stable manifold of $x^*$ in finite time. In general, we expect that this behavior is pathological for many functions of interest. However, a detailed treatment of this issue is beyond the scope of this paper. We also note that because the stable manifold is inherently \emph{unstable}, this issue can be sidestepped by adding noise to the optimization process. For example, using the stable-manifold theorem from this paper, in \cite{SMKP2020saddles} it is shown that distributed stochastic gradient descent (D-SGD) avoids saddle points with probability 1, regardless of initialization.}
However, if objective functions are globally smooth, then we can say more, as stated in the following theorem. 
\begin{theorem} \label{thrm:ae-convergence}
Suppose that Assumptions \ref{a:G-connected-undirected}--\ref{a:eigvec-continuity} hold and, moreover, each $f_n$ is (globally) $C^2$. Let $x^*$ be a regular saddle point of $f$. Then for each time $t_0$, the set of initial conditions in $\R^{Nd}$ from which $\vx_n(t)$ converges to $x^*$ for some (then every) agent $n$ is a Lebesgue-measure-zero set.  
\end{theorem}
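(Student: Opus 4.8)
The plan is to globalize the local stable-manifold theorem (Theorem~\ref{thrm:non-convergence-saddles}) by exploiting the fact that global $C^2$ regularity turns \eqref{dynamics_CT} into a genuine ODE whose flow maps are $C^1$ diffeomorphisms. First I would reduce the statement to convergence of the full state. Since global $C^2$ regularity implies Assumption~\ref{a:Lip-fn}, all hypotheses of Theorem~\ref{thrm:cont-conv-cp} hold, so the consensus property (part (i)) gives $\|\vx_n(t)-\vx_\ell(t)\|\to 0$. Hence $\vx_n(t)\to x^*$ for some $n$ if and only if the full state $\vx(t)=(\vx_n(t))_{n=1}^N$ converges to $\tilde x=(x^*,\ldots,x^*)$; this justifies the ``some (then every)'' phrasing and reduces the problem to bounding the measure of
$$
W_{t_0}:=\{x_0\in\R^{Nd}:\ \vx(t)\to\tilde x \text{ when } \vx(t_0)=x_0\}.
$$

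Next I would set up the flow maps. Under global $C^2$ regularity the right-hand side of \eqref{dynamics_CT} is a single-valued vector field that is $C^1$ in $\vx$ and continuous in $t$; coercivity (Assumption~\ref{a:coercive}) yields bounded trajectories and hence global existence, consistent with Theorem~\ref{thrm:cont-conv-cp}. By smooth dependence on initial conditions, the flow map $\Phi_{t_0,t}$ sending $\vx(t_0)$ to $\vx(t)$ is, for each pair $(t_0,t)$, a $C^1$ diffeomorphism with $C^1$ inverse $\Phi_{t,t_0}$. The key structural observation is that any $x_0\in W_{t_0}$ eventually enters the neighborhood $\calN$ of Theorem~\ref{thrm:non-convergence-saddles}: since $\Phi_{t_0,t}(x_0)\to\tilde x\in\calN$ and $\calN$ is open, there is an integer $k\ge t_0$ with $\Phi_{t_0,k}(x_0)\in\calN$, and because the tail trajectory started at time $k$ still converges to $\tilde x$, in fact $\Phi_{t_0,k}(x_0)\in S_k$, the local stable manifold with initial time $k$. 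This yields the countable covering
$$
W_{t_0}\subseteq \bigcup_{k\in\N,\,k\ge t_0}\Phi_{t_0,k}^{-1}(S_k).
$$

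To finish I would invoke the dimension count. Since $x^*$ is a saddle point with nonsingular Hessian it is not a local minimum, so $\Hess f(x^*)$ has at least one negative eigenvalue, i.e. $q\ge 1$; hence each $S_k$ is a $C^1$ manifold of dimension $Nd-q<Nd$ and therefore has Lebesgue measure zero in $\R^{Nd}$. Because $\Phi_{t_0,k}^{-1}=\Phi_{k,t_0}$ is $C^1$, hence locally Lipschitz, it maps Lebesgue-null sets to Lebesgue-null sets, so each $\Phi_{t_0,k}^{-1}(S_k)$ is null. Countable subadditivity of Lebesgue measure then gives that $W_{t_0}$ is null, completing the proof.

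The main obstacle, and the reason global smoothness is essential, is precisely the step that a backward flow map preserves Lebesgue-null sets. As noted in the footnote after Theorem~\ref{thrm:non-convergence-saddles}, for the nonsmooth differential inclusion \eqref{dynamics_CT} the flow can concentrate positive-volume sets into null sets in finite time, so $\Phi_{t_0,k}^{-1}(S_k)$ could fail to be null; the $C^1$ (bi-Lipschitz on compacta) structure available under global $C^2$ regularity is exactly what rules this pathology out. The remaining care is bookkeeping: one must verify that the covering uses a \emph{countable} index set, so that the $\sigma$-additivity of the null ideal applies.
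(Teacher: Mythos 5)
Your proof is correct and follows essentially the same route as the paper, which only sketches the argument: invoke the local stable-manifold theorem, note that $q\ge 1$ so each time-slice $S_k$ is a null set, and globalize by covering the global stable set with countably many preimages of these local manifolds under the flow, using the fact that the flow cannot concentrate a positive-measure set into a null set in finite time. The one point of divergence is how that last fact is justified: the paper appeals to the bounded divergence of the $C^2$ gradient field via the divergence theorem (a Liouville-type volume estimate), whereas you use that the flow maps of a $C^1$ vector field are injective with everywhere-invertible derivative, hence locally bi-Lipschitz onto their images and null-set preserving; both are valid, and your version is arguably more self-contained.
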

Note that, by Theorem \ref{thrm:cont-conv-cp}, agents achieve consensus under the assumptions in the previous theorem. Thus, if $\vx_n(t)\to x^*$ for some $n$, then this occurs for every $n$. 
Theorem \ref{thrm:ae-convergence} follows from Theorem \ref{thrm:non-convergence-saddles} and the observation that 
the gradient field of a $C^2$ function has bounded divergence. In particular, a simple application of the classical divergence theorem (or Gauss-Green theorem \cite{evans2015measure}) shows that the system cannot concentrate a set of positive measure into a zero measure set in finite time (see, e.g., proof of Proposition 21 in \cite{swenson2018best}). 

\bigskip
\noindent \textbf{Organization}. The remainder of the paper is organized as follows. Section \ref{sec:lit-review} briefly reviews related literature and Section \ref{sec:prelims} sets up notation to be used in the proofs. In order to simplify notation and make proofs more transparent, it will be helpful to consider distributed optimization of \eqref{eqn:f-def} as a special case of a general subspace-constrained optimization problem. Section \ref{sec:general-setup} sets up the general optimization problem that will be used to prove the main results. Section \ref{sec:conv-to-CP} shows convergence to critical points. Section \ref{sec:CT-stable-manifold} proves the stable-manifold theorem for DGF and presents an illustrative example discussing computation of the stable manifold. Finally, Section \ref{sec:conclusions} concludes the paper.

\subsection{Literature Review} \label{sec:lit-review}
Algorithms for distributed optimization with convex cost functions have been studied extensively in the literature. 
While a complete survey of this topic is beyond the scope of the paper, we note that key issues which have been addressed in this context include optimization over time-varying  and directed communication networks \cite{nedic2014distributed,gharesifard2013distributed};
constrained optimization  \cite{nedic2010constrained,zeng2016distributed,zhu2011distributed}; 
convergence rate analysis \cite{jakovetic2014fast,nedic2017achieving}; and optimization of
nonsmooth objectives \cite{nedic2009distributed,duchi2011dual}\cite{liang2017distributed}.
In contrast, in this paper we consider optimization of nonconvex and nonsmooth objective functions. In order to focus our attention squarely on the challenging issues that arise from these assumptions, we restrict our attention to the relatively simple setting of unconstrained optimization over a time-invariant undirected graph. 

The fact that (centralized) continuous-time gradient flows do not converge to saddle points follows from the classical stable-manifold theorem \cite{chicone2006ordinary}. Nonconvergence to saddle points for discrete-time gradient algorithms has been a subject of recent interest \cite{lee2016gradient,lee2019first}---nonconvergence in this setting follows from the stable-manifold theorem for discrete-time dynamical systems \cite{shub2013global}. A related line of recent research has investigated the issue of escaping from saddle points in centralized settings \cite{jin2019nonconvex,jin2017escape,du2017gradient,murray2017revisiting}.

Distributed nonconvex optimization has recently become the subject of intensive research attention. Pioneering early work on this topic can be found in \cite{bianchi2012convergence} which studied a projected variant of DGD for constrained nonconvex optimization and demonstrated convergence to KKT points. The present paper is closely related to \cite{bianchi2012convergence} in that we study the continuous flow underlying DGD and we prove convergence to critical points using techniques from the theory of stochastic approximation and perturbed differential inclusions.
However, our work work differs from \cite{bianchi2012convergence} in significant ways, e.g., we allow for nonsmooth functions and we study the issues of saddle point nonconvergence and existence of stable manifolds for DGF. 

More recent works including  
\cite{hong2017prox,di2016next,sun2016distributed,scutari2019distributed,tatarenko2017non,tian2018asy,sun2019distributed,wai2017decentralized} have addressed various issues related to obtaining convergence to critical points, including dealing with directed graphs, time-varying graphs, and nonsmooth regularizers. The recent work \cite{daneshmand2018second,daneshmand2018second-b} studied the problem of avoiding saddle points with discrete-time DGD with constant step size. Using the classical (discrete-time) stable-manifold theorem, it was shown that DGD with sufficiently small step sizes avoids saddle points and converges to the neighborhood of local minima. References \cite{vlaski2019distributed1,vlaski2019distributed2} study a diffusion adaptation variant of gradient descent and show that under appropriate noise assumptions it is able to escape from saddle points in polynomial time. 
In addition to the fact that we study nonsmooth functions and study the underlying differential inclusions, our work differs from these in that we obtain convergence to consensus and critical points and explicitly characterize the stable manifold associated with saddle points. 
Moreover, because we study the continuous-time flow, the results derived in this paper can be used to approximate discrete-time diminishing-step-size versions of DGD, obtaining convergence in the presence of noise \cite{SMKP2020saddles}.
In another related line of research, annealing based methods for distributed global optimization in nonconvex problems are considered in \cite{swenson2019CDC,swenson2019ICASSP}. While these methods achieve global convergence guarantees, they require careful tuning of the annealing schedule and convergence can be slow in some applications. 

Limited research has been conducted on the topic of distributed optimization when objectives are both nonsmooth and nonconvex. References \cite{di2016next} and \cite{scutari2019distributed} consider convergence to critical points when the distributed objective is the sum of a smooth nonconvex component and a nonsmooth convex component  (or difference-of-convex with smooth convex part). 
In contrast, here we obtain convergence of DGF to critical points under broad assumptions, analogous to the state-of-the art guarantees for centralized (discrete-time) first-order methods found in \cite{davis2020stochastic}. Among other things, these assumptions handle neural networks with nonsmooth activation functions and $\ell_1$ or $\ell_2$ regularization.  

We remark that a preliminary conference version of this paper appeared in \cite{swenson2019allerton}. Most significantly, the present paper differs from \cite{swenson2019allerton} in that it handles nonsmooth objective functions and proves smoothness of the stable manifold (which is required to obtain that the manifold is a measure-zero set). We also note that the present paper fills a gap in the proof of Theorem 1 and 2 in \cite{swenson2019allerton} which requires Assumptions \ref{a:critical-points} and \ref{a:eigvec-continuity}.

As an illustration of the practical applicability of the results derived in this paper, in a related work \cite{SMKP2020saddles} the stable-manifold theorem for DGF (Theorem \ref{thrm:non-convergence-saddles} above) is used to study discrete-time distributed stochastic gradient descent (D-SGD). In particular, in \cite{SMKP2020saddles} it is shown that, regardless of initialization, with probability 1, D-SGD does not converge to regular saddle points. The stable-manifold theorem from this paper plays a critical role in deriving that result. 

\subsection{Notation} \label{sec:prelims}
We say that $g\in C^r(\R^m; \R^n)$, for integer $r\geq 1$, if $g:\R^m\to\R^n$ is $r$-times continuously differentiable. When the domain and codomain are clear from the context, we simply use the shorthand $g\in C^r$ or say $g$ is $C^r$.
If $g$ is $C^1$, we use the notation $D[g,x]$ to denote the derivative of $g$ at the point $x$. 
In the case that $g:\R^n\to\R$ is $C^2$, we often use the standard notation $\nabla g$ and $\Hess g$ to refer to the gradient and Hessian of $g$ respectively.

We will use $\|\cdot\|$ to denote the standard Euclidean norm.  Given a set $S\subset \R^d$ and point $x\in \R^d$, we let $d(x,S) := \inf_{y\in S} \|x-y\|$ and let $B_\delta(S) := \{x: d(x,S)<\delta\}$. When we say $x(k)\to S$ as $k\to\infty$, we mean that $\lim_{k\to\infty} d(x(k),S)=0$. Given $a,b\in\R$, $a\wedge b$ is the minimum of $a$ and $b$. $A \otimes B$ indicates the Kronecker product of matrices $A$ and $B$ of compatible dimension. Given a matrix $A\in \R^{d\times d}$, $\diag(A)$ is the $d$-dimensional vector containing the diagonal entries of $A$. In an abuse of notation, given a vector $v\in \R^d$, we also use $\diag(v)$ to denote the $d\times d$ diagonal matrix with entries of $v$ on the diagonal. 

Given a graph $G=(V,E)$, the set of vertices $V=\{1,\ldots,N\}$ will be used to denote the set of agents and an edge $(i,j)\in E$ will denote the ability of two agents to exchange information. In this paper we will assume $G$ is undirected, meaning that $(i,j) \in E$ implies that $(j,i) \in E$. We let $\Omega_n$ denote the set of neighbors of agent $n$, namely $\Omega_n = \{i \in 1 \dots N : i \neq n, (i,n) \in E\}$, and we let $d_n =|\Omega_n|$.
The graph Laplacian is given by the $N\times N$ matrix $L = D-A$, where $D=\diag(d_1,\ldots,d_N)$ is the degree matrix and $A=(a_{ij})$ is the adjancency matrix defined by $a_{ij} = 1$ if $(i,j)\in E$ and $a_{ij}=0$ otherwise. Further details on spectral graph theory can be found in \cite{chung1997spectral}.

Suppose that $F:\R^m\to\R$ is locally Lipschitz, and consider the differential inclusion
\begin{equation} \label{eqn:generic-ode}
\dot \vx \in \partial F(\vx,t),
\end{equation}
where $\vx:\R\to\R^m$ and $\dot \vx$ denotes $\ddt \vx(t)$. We say $\vx$ is a solution to \eqref{eqn:generic-ode} with initial condition $x_0$ at time $t_0$ if $\vx$ is absolutely continuous and, satisfies $\vx(t_0) = x_0$, and satisfies \eqref{eqn:generic-ode} for almost all $t\geq t_0$.

The generalized gradient (Definition \ref{def:gen-grad}) is known to be upper semicontinuous when the function in question is locally Lipschitz \cite{clarke2008nonsmooth,cortes2008discontinuous}. As this property will be important in subsequent derivations, we recall the definition here.
\begin{definition} \label{def:upper-semicont}
A set-valued function $G:\R^m\rightrightarrows\R^m$ is said to be \emph{upper semicontinuous} at $x$ if for any $\e>0$ there exists a $\delta > 0$ such that for all $y\in B_\delta(x)$, $G(y)\subset B_{\e}(G(x))$.
\end{definition}

\section{Generalized Setup: Subspace-Constrained Optimization} \label{sec:general-setup}
The problem of minimizing \eqref{eqn:f-def} in a distributed setting may be viewed as the subspace-constrained optimization problem
\begin{equation} \label{eq:dist-opt-prob}
\min_{\substack{x_n\in \R^d,~ n=1,\ldots,N\\ x_1 = x_2 = \cdots = x_N }} \sum_{n=1}^N f_n(x_n).
\end{equation}
Rather than focus on the specific problem \eqref{eq:dist-opt-prob} we will consider optimization of general subspace-constrained optimization problems. This will significantly simplify notation by eliminating distributed-consensus specific notation and will improve the transparency of proofs. 


In Section \ref{sec:gen-framework} we will set up the general subspace constrained optimization problem to be considered in the rest of the paper and describe a generalization of \eqref{dynamics_CT} for addressing this problem. However, before considering the general problem it will be helpful to first derive some simple time changes. This will be done in Section \ref{sec:time-changes}. After a time change, the dynamics \eqref{dynamics_CT} admit an intuitive interpretation in terms of gradient descent with respect to a penalty function. This will become clear in Section \ref{sec:gen-framework}.

\subsection{Time Changes} \label{sec:time-changes}
The differential inclusion \eqref{dynamics_CT} may be expressed compactly as
\begin{equation} \label{eq:ODE-vec-form}
\dot \vx \in \beta_t(L\otimes I_{d})\vx - \alpha_t(\partial f_n(\vx_n))_{n=1}^N,
\end{equation}
where we let $\vx:\R\to\R^{Nd}$ be the vectorization $\vx := (\vx_1,\ldots,\vx_N)$, where $\vx_n:\R\to\R^d$ represents the state of agent $n$, and, as before, we assume $\alpha_t/\beta_t\to 0$. It will often be convenient to study this ODE under a time change. In particular, assuming $\alpha_t>0$ for $t\geq 0$, set $S(t) = \int_{0}^t \alpha_r \,dr$ and let $T(\tau)$ denote the inverse of $S(t)$ for $\tau\geq 0$, so that $T(S(t)) = t$. Letting $\vy(\tau) = \vx(T(\tau))$ we have
\begin{equation} \label{eq:ODE-alt-form1}
\dot \vy(\tau) \in \gamma_\tau(L\otimes I_{d})\vy(\tau) - (\partial f_n(\vy(\tau)))_{n=1}^N,
\end{equation}
where $\gamma_\tau = \frac{\beta_{T(\tau)}}{\alpha_{T(\tau)}} \to \infty$ as $\tau\to\infty$. Likewise, if we set $S(t) = \int_{0}^t \beta_r \,dr$ and let $T(\tau)$ denote the inverse of $S(t)$ we have
\begin{equation} \label{eq:ODE-alt-form2}
\dot \vy(\tau) \in (L\otimes I_{d})\vy(\tau) - \tilde \gamma_\tau(\partial f_n(\vy(\tau)))_{n=1}^N,
\end{equation}
where $\tilde \gamma_\tau = \frac{\alpha_{T(\tau)}}{\beta_{T(\tau)}} \to 0$ as $\tau\to\infty$. Thus, processes of the form \eqref{eq:ODE-alt-form1} or \eqref{eq:ODE-alt-form2}, with $\gamma_t \to \infty$ or $\tilde \gamma_t\to 0$ respectively, generalize dynamics of the form \eqref{dynamics_CT}.
When convenient we will study \eqref{eq:ODE-alt-form1} or \eqref{eq:ODE-alt-form2} (with associated parameter $\gamma_\tau$ or $\tilde \gamma_\tau$) in lieu of \eqref{dynamics_CT}. 

\subsection{Subspace-Constrained Optimization Framework} \label{sec:gen-framework}
Consider the optimization problem
\begin{align} \tag{P.1} \label{eq:opt-prob}
\min_{x\in R^M} & \quad h(x)\\
\text{subject to} & \quad  x^\T Q x = 0,
\end{align}
where $h:\R^M\to\R$ is a locally Lipschitz function and $Q\in\R^{M\times M}$ is a positive semidefinite matrix. For ease of notation we will denote the constraint set by
\begin{equation} \label{eq:constraint-def}
\C \coloneqq \{x\in \R^M:\,x^\T Qx = 0\}.
\end{equation}
Since $Q$ is positive semidefinite, $\C$ is precisely the set $\{x: Qx = 0\}$, i.e., the nullspace of $Q$; we write the constraint in its quadratic form because we will solve this problem using a penalization approach that connects directly with the quadratic form.
In the remainder of the paper we will focus on computing critical points in \eqref{eq:opt-prob}.

Consider the following dynamical system for solving \eqref{eq:opt-prob}:
\begin{equation}\label{eq:ODE1}
\dot \vx \in  -\partial h (\vx) -\gamma_t Q\vx,
\end{equation}
where the weight $\gamma_t\to\infty$. Note that solutions to \eqref{eq:ODE1} exist if $h$ is locally Lipschitz continuous (see Assumption \ref{a:h-loc-Lip} below).
Note that these may be viewed as the generalized gradient descent dynamics associated with the (time-varying) function $x\mapsto h(x) + \gamma_t \frac{1}{2}x^\T  Q x$, i.e., $$
\dot \vx \in -\partial_x \left( \gamma\frac{1}{2}\vx^\T  Q\vx + h (\vx) \right).
$$
The term $\gamma_t x^\T  Q x$ may be thought of as a quadratic penalty term that punishes deviations from $\C$ with increasing severity as $t\to\infty$.

\subsection{DGF as a Special Case} \label{sec:DGD-special-case}
The DGF dynamics \eqref{eq:ODE-alt-form2} for distributed optimization may be seen as a special case of this general framework in which we let the dimension be given by $M= Nd$, the state $x\in \R^{Nd}$ is given by the vectorization of all agents' states $x=\{x_n\}_{n=1}^N$, the objective function is given by $h(x) = \sum_{n=1}^Nf_n(x_n)$, and the penalty term is generated by setting $Q = (L\otimes I_d)$, where $I_d$ is the $d\times d$ identity matrix and $L$ denotes the graph Laplacian of $G$ given in Assumption \ref{a:G-connected-undirected}. 
In this setup, the constraint set $\calC$ is the consensus subspace, which is given by the nullspace of $(L\otimes I_d)$. If the $G$ is connected, this is the subspace of $R^{Nd}$ where $x_n=x_\ell$ for all $n,\ell$. 
It will be important later to note that under Assumption \ref{a:G-connected-undirected}, $Q$ has at least one zero eigenvalue (cf. Assumption \ref{a:Q-PSD1} below) and under Assumption \ref{a:step-size-CT} and the time change given in \eqref{eq:ODE-alt-form1} we have $\gamma_t\to\infty$. 

\section{Convergence to Critical Points} \label{sec:conv-to-CP}
In this section we show that \eqref{eq:ODE1} converges to critical points of $h$ restricted to $\calC$ (i.e., we prove Theorem \ref{thrm:cont-conv-cp}). 
Before proceeding, we will begin by introducing some conventions that will simplify notation.
Throughout this section, without loss of generality 
assume the coordinate system is rotated so that the constraint space is given by
\begin{equation} \label{eq:C-rotation}
\calC = \{x\in\R^M: x_{d+1}=\cdots =x_M = 0\},
\end{equation}
where we let\footnote{We note that we also used $d$ for the dimension of the domain of $f$ and $f_n$ in Section \ref{sec:intro}. Since, in the context of the distributed framework $\calC$ corresponds to the consensus subspace, which has dimension $d$, this does not result in a conflict of notation.} 
$$
d= \dim\C.
$$
Given a vector $x\in \R^M$, we will use the decomposition
$$x = (x_c,x_{nc}),$$
$x_c\in \R^d$, $x_{nc}\in\R^{M-d}$, where the subscripts indicate the ``constraint'' and ``not constraint'' components respectively.
In a slight abuse of notation, given $x_c\in \R^{d}$ we let
$$
h\vert_\C(x_c) := h(x_c,0).
$$
Given $x\in \R^M$ define
\begin{multline}
\partial_{x_c} h(x) := \{z\in \R^{d}: (z,y)\in \partial h(x),\\ 
\mbox{ for some } y\in \R^{M-d}\}.
\end{multline}
In a slight abuse of terminology, we say that $x^*=(x_c^*,x_{nc}^*)\in\R^M$ is a critical point of $h\vert_\C$ if $0\in \partial_{x_c} h(x_c^*,0)$, or equivalently, if $0\in \partial h\vert_\C(x_c^*)$.

We now present the assumptions we will use in the general framework. Because we are now studying the general subspace-constrained optimization framework, these assumptions pertain to \eqref{eq:opt-prob} and the optimization dynamics \eqref{eq:ODE1}, and are distinct from the previous assumptions made in the paper (which applied explicitly to the DGF framework). To distinguish these assumptions from those made earlier, previous assumptions have been numbered A.1., A.2., etc., while all subsquent assumptions will be numbered B.1., B.2., etc. 

\begin{newassumption} 
\label{a:h-loc-Lip}
  $h$ is locally Lipschitz continuous. 
\end{newassumption}
\begin{newassumption} 
\label{a:coercive-h}
 $h$ is coercive, i.e., $h(x)\to\infty$ as $\|x\|\to\infty$.
\end{newassumption}

\begin{newassumption} 
\label{a:Q-PSD1}
  $Q\in \R^{M\times M}$ is positive semidefinite with at least one zero eigenvalue.
\end{newassumption}
\begin{newassumption} \label{a:zero-measure-cp}
Let $\text{CP}_{h\vert_\C}$ be the critical points set of $h\vert_\C$. Assume that $\R\backslash h\vert_{\C}(\text{CP}_{h\vert_\C})$ is a dense set in $\R$.
\end{newassumption}
\begin{newassumption} \label{a:chain-rule-h}
For any absolutely continuous function $\vz:[0,\infty)\to \R^{d}$, 
$h\vert_\C$ satisfies the chain rule
$$
\ddt h\vert_\C(\vz(t)) = \langle v, \ddt \vz(t) \rangle, 
$$
for some $v\in \partial f(\vz(t))$, and almost all $t\geq 0$. 
\end{newassumption}
\begin{newassumption} \label{a:gamma_t}
$t\mapsto \gamma_t$ is bounded on compact intervals and satisfies
$\lim_{t\to\infty} \gamma_t = \infty$. 
\end{newassumption}

Assumption \ref{a:h-loc-Lip} ensures that \eqref{eq:ODE1} is well defined, Assumption \ref{a:coercive-h} ensures solutions to \eqref{eq:ODE1} remain in a compact set, and Assumption \ref{a:Q-PSD1} ensures that the constraint set $\calC$ is nonempty. Assumptions \ref{a:zero-measure-cp}--\ref{a:chain-rule-h} are technical assumptions required to ensure convergence to critical points. 

We will prove the following result that implies Theorem \ref{thrm:cont-conv-cp}.
\begin{theorem} \label{thrm:conv-to-cp-CT-h}
Let $\vx$ be a solution to \eqref{eq:ODE1} and suppose that Assumptions \ref{a:h-loc-Lip}--\ref{a:gamma_t} hold. Then,
\begin{itemize}
    \item [(i)] $\vx(t) \to \C$ as $t\to\infty$.
    \item [(ii)] $\vx(t)$ converges to the set of critical points of $h\vert_{\C}$ as $t\to\infty$.
\end{itemize}
\end{theorem}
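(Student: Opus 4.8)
The plan is to read \eqref{eq:ODE1} as a singularly perturbed, two--time--scale differential inclusion: the penalty $-\gamma_t Q\vx$ is a fast constraint--restoring force pulling $\vx$ onto $\C$, while $-\partial h(\vx)$ is a slow drift that, once on $\C$, is the generalized gradient flow of $h\vert_\C$. Following \eqref{eq:C-rotation} I rotate coordinates and split $\vx=(\vx_c,\vx_{nc})$ with $Q\vx=(0,\tilde Q\vx_{nc})$ and $\tilde Q\succ 0$ on $\R^{M-d}$; then $\dot\vx_c=-v_c$ carries no penalty while $\dot\vx_{nc}=-v_{nc}-\gamma_t\tilde Q\vx_{nc}$ is strongly damped, where $v=(v_c,v_{nc})\in\partial h(\vx)$. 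The proof has three stages: (1) boundedness of $\vx(t)$; (2) Part (i), $\vx_{nc}(t)\to 0$; (3) Part (ii), convergence of $\vx_c$ to $\mathrm{CP}_{h\vert_\C}$ by a LaSalle/Lyapunov argument in the style of the differential--inclusion theory of Bena\"{\i}m--Hofbauer--Sorin.

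Part (i) is the cleanest piece and uses the \emph{smooth} Lyapunov function $V(\vx)=\tfrac12\vx^\T Q\vx$, so no nonsmooth chain rule is needed. Differentiating along the flow, $\dot V=-\langle Q\vx,v\rangle-\gamma_t\|Q\vx\|^2\le \|Q\vx\|\,C-\gamma_t\|Q\vx\|^2$, once $v$ is known to be bounded by some $C$ on the (compact) closure of the trajectory. Using $\|Q\vx\|^2\ge c_0 V$ and $\|Q\vx\|\le c_1\sqrt V$ (both from $\tilde Q\succ 0$) gives the scalar differential inequality $\tfrac{d}{dt}\sqrt V\le \tfrac{c_1C}{2}-\tfrac{c_0\gamma_t}{2}\sqrt V$, whose solutions tend to $0$ because the damping $\gamma_t\to\infty$ (Assumption \ref{a:gamma_t}); hence $V(t)\to 0$, i.e. $\vx_{nc}(t)\to 0$ and $\vx(t)\to\C$. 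The input to this step is boundedness (so that $C$ exists): the guiding intuition is that $\vx$ descends the penalized objective $h(\vx)+\tfrac{\gamma_t}{2}\vx^\T Q\vx$, whose sublevel sets are bounded by coercivity (Assumption \ref{a:coercive-h}) and $Q\succeq 0$; since Assumption \ref{a:gamma_t} gives $\gamma_t$ no regularity or monotonicity, I would obtain an a priori compact invariant region by combining the contraction of $\vx_{nc}$ above with coercivity of $h\vert_\C$ to trap $\vx_c$, rather than by naively differentiating the time--dependent energy.

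For Part (ii) the reduced dynamics are $\dot\vx_c=-v_c$ with $v_c\in\partial_{x_c}h(\vx_c,\vx_{nc})$ and $\vx_{nc}\to 0$. Upper semicontinuity of $\partial h$ (Definition \ref{def:upper-semicont}) lets me view $\vx_c(\cdot)$ as an asymptotic pseudotrajectory of the autonomous inclusion $\dot z\in-\partial h\vert_\C(z)$ on $\C\cong\R^d$, so its limit set is nonempty, compact, connected and internally chain transitive for that flow. The chain--rule Assumption \ref{a:chain-rule-h} then supplies the strict Lyapunov property $\tfrac{d}{dt}h\vert_\C(\vx_c(t))=\langle v_c,\dot\vx_c\rangle=-\|v_c\|^2\le 0$, so $h\vert_\C(\vx_c(t))$ is asymptotically nonincreasing and, being bounded below, converges; thus $h\vert_\C$ is constant on the limit set. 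Finally, Assumption \ref{a:zero-measure-cp}, which says the critical values $h\vert_\C(\mathrm{CP}_{h\vert_\C})$ have empty interior, is exactly the hypothesis needed in the Bena\"{\i}m--Hofbauer--Sorin convergence theorem to force an internally chain transitive set carrying such a Lyapunov function into the critical set. This yields $\vx_c(t)\to\mathrm{CP}_{h\vert_\C}$, proving Part (ii).

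The main obstacle I anticipate is making the two--time--scale reduction rigorous, i.e. showing that the fast penalty and the slow descent genuinely decouple so the asymptotics are governed by $\dot z\in-\partial h\vert_\C(z)$. Two points require care. First, the system is nonautonomous and set--valued, so the asymptotic--pseudotrajectory machinery should be applied through the time--changed inclusions \eqref{eq:ODE-alt-form1}--\eqref{eq:ODE-alt-form2} rather than naively. Second, and more delicate, one must reconcile the limit of $\partial_{x_c}h(\vx_c,\vx_{nc})$ as $\vx_{nc}\to 0$ with $\partial h\vert_\C(\vx_c)$, since in general the restriction of the generalized gradient need not equal the generalized gradient of the restriction; here upper semicontinuity together with the fact that the chain rule (Assumption \ref{a:chain-rule-h}) is imposed directly on $h\vert_\C$ is what legitimizes both the descent estimate and the invariance argument. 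Boundedness, because $\gamma_t$ is only required to diverge, is the other place where the argument is more than bookkeeping.
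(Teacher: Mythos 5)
Your overall strategy coincides with the paper's: part (i) is proved by essentially the same Lyapunov/differential-inequality argument on the off-constraint component (Lemma \ref{lemma:cont-consensus} bounds the trajectory via coercivity and then shows $\tfrac{d}{dt}\|\vx_{nc}\|^2$ is forced positive outside an $\e$-ball once $\gamma_t$ is large), and part (ii) follows the same Bena\"{\i}m--Hofbauer--Sorin/Davis-et-al. template of passing to limits of time-shifted trajectories, using $h\vert_\C$ as an asymptotic Lyapunov function, and invoking the density of noncritical values (Assumption \ref{a:zero-measure-cp}). The only structural difference is that you propose to cite the asymptotic-pseudotrajectory/internally-chain-transitive machinery as a black box, whereas the paper reproves the needed pieces in a self-contained way (Arzela--Ascoli plus Banach--Alaoglu and Mazur's theorem in Lemma \ref{lemma:conv-to-z-limit} to identify the limiting inclusion $\dot\vz\in\partial h\vert_\C(\vz)$, then Lemmas \ref{lemma:conv-of-potential} and \ref{lemma:CT-conv-to-cp}); that trade-off is legitimate, and your version would be shorter at the cost of having to verify that the nonautonomous, set-valued, subspace-penalized system really fits the cited theorem's hypotheses.

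One step in your sketch is stated too strongly and would not survive as written: the claim that $\tfrac{d}{dt}h\vert_\C(\vx_c(t))=\langle v_c,\dot\vx_c\rangle=-\|v_c\|^2\le 0$ along the \emph{actual} trajectory, hence that $h\vert_\C(\vx_c(t))$ is asymptotically nonincreasing. The velocity $\dot\vx_c=-v_c$ is drawn from $\partial_{x_c}h(\vx_c,\vx_{nc})$ evaluated \emph{off} the constraint set, while Assumption \ref{a:chain-rule-h} only produces some (possibly different) element $w\in\partial h\vert_\C(\vx_c)$ with $\tfrac{d}{dt}h\vert_\C(\vx_c)=\langle w,\dot\vx_c\rangle$; since $\partial h\vert_\C$ need not be a singleton and $v_c$ need not even lie in it, $\langle w,-v_c\rangle$ can be positive, so no monotonicity (even asymptotic) is available pointwise in time. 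You flag exactly this reconciliation issue in your last paragraph, but your part (ii) as drafted still leans on the monotone decrease to get convergence of $h\vert_\C(\vx_c(t))$. The paper's Lemma \ref{lemma:conv-of-potential} circumvents this: it never claims monotonicity, but instead shows by a crossing-times contradiction (between the sublevel sets $L_\e$ and $L_{2\e}$, with $\e$ a noncritical value supplied by Assumption \ref{a:zero-measure-cp}) that the value process cannot oscillate, because every limit of time-shifted trajectories is a genuine descent curve for $h\vert_\C$ on which the chain rule \emph{does} apply. If you instead route everything through the chain-transitivity theorem, the Lyapunov property need only be verified for the limiting flow $\dot z\in-\partial h\vert_\C(z)$, which repairs the gap; either way, the descent estimate must be applied to the limit dynamics, not to $\vx_c(t)$ itself.
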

We remark that under Assumptions \ref{a:h-loc-Lip}--\ref{a:coercive-h}, the set of critical points of $h|_\C$ is nonempty.
The proof of Theorem \ref{thrm:conv-to-cp-CT-h} will be given in Section \ref{sec:CT-conv-to-cp} below, and will rely on techniques from the theory of stochastic approximation and perturbed differential inclusions \cite{benaim2005stochastic,davis2020stochastic}. Before proceeding to the proof, we will first briefly review some relevant tools in the next section.

\subsection{Intermediate Results}
In order to prove Theorem \ref{thrm:conv-to-cp-CT-h}, we will use the following standard results from functional analysis. 

Before stating the first result, we recall that a function $v:[0,T]\to\R^m$ is said to belong to $L^2([0,T];R^m)$, or $L^2$ for short, if $\int_0^T \left([v(t)]_i\right)^2\dt < \infty$ for each $i=1,\ldots,m$, where $[v(t)]_i$ indicates extracting the $i$-th coordinate map of $v$. A sequence of functions $v_j\in L^2([0,T]; \R^m)$, $j=1,2,\ldots$ is said to be bounded in $L^2$ if 
$$
\sup_{j\geq 1}\int_{0}^T \left([v_j(t)]_i\right)^2\dt < \infty,
$$ 
for each $i=1,\ldots,m$.
\begin{lemma} \label{lemma:alaoglu}
Let $T>0$ and suppose that $\{v_j\}_{j\geq 1}$ is a sequence of functions $v_j$, bounded in $L^2([0,T],\R^m)$. Then there exists a subsequence $\{v_{j_\ell}\}_{\ell\geq 1}$ that converges weakly to some function $\hat v$ in $L^2([0,T],\R^m)$.
\end{lemma}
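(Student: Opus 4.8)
The plan is to exploit that $L^2([0,T];\R^m)$, equipped with the inner product $\langle u,w\rangle = \sum_{i=1}^m \int_0^T [u(t)]_i\,[w(t)]_i \dt$, is a separable Hilbert space, and that bounded sequences in such a space admit weakly convergent subsequences. One could in principle invoke Banach--Alaoglu together with reflexivity and the Eberlein--\v{S}mulian theorem to pass from weak-$*$ compactness to \emph{sequential} weak compactness, but I would instead give the direct diagonal-extraction argument, which is self-contained and avoids those topological subtleties.

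First, summing the coordinatewise $L^2$ bounds in the hypothesis yields a uniform norm bound $\|v_j\|_{L^2}\leq C$ for some constant $C<\infty$ and all $j\geq 1$. Fix a countable orthonormal basis $\{e_k\}_{k\geq 1}$ of $L^2([0,T];\R^m)$ (which exists by separability). For each fixed $k$, Cauchy--Schwarz gives $|\langle v_j, e_k\rangle| \leq C$, so the scalar sequence $(\langle v_j, e_k\rangle)_{j\geq 1}$ is bounded and, by Bolzano--Weierstrass, has a convergent subsequence. A standard diagonalization over $k$ then produces a single subsequence $\{v_{j_\ell}\}_{\ell\geq 1}$ along which $\langle v_{j_\ell}, e_k\rangle$ converges as $\ell\to\infty$ for every $k$ simultaneously.

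Next, I would upgrade convergence against basis vectors to convergence against arbitrary $w\in L^2$. Given $w$ and $\e>0$, choose a finite linear combination $w'\in\myspan\{e_k\}$ with $\|w-w'\|\leq\e$; writing $\langle v_{j_\ell}-v_{j_{\ell'}}, w\rangle$ as $\langle v_{j_\ell}-v_{j_{\ell'}}, w'\rangle + \langle v_{j_\ell}-v_{j_{\ell'}}, w-w'\rangle$, the first term is Cauchy in $\ell,\ell'$ (a finite combination of convergent sequences) while the second is bounded by $2C\e$. Hence $(\langle v_{j_\ell}, w\rangle)_\ell$ is Cauchy, so convergent, for every $w$. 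The map $\phi(w) := \lim_{\ell} \langle v_{j_\ell}, w\rangle$ is linear and satisfies $|\phi(w)|\leq C\|w\|$, so by the Riesz representation theorem there is $\hat v\in L^2([0,T];\R^m)$ with $\phi(w)=\langle \hat v, w\rangle$. Thus $\langle v_{j_\ell}, w\rangle \to \langle \hat v, w\rangle$ for all $w$, which is exactly weak convergence $v_{j_\ell}\rightharpoonup \hat v$.

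Since this is a classical fact, there is no genuine obstacle. The only step demanding care is the density-plus-boundedness upgrade in the third paragraph: convergence on a dense set alone does not transfer to the whole space, and it is the \emph{uniform} bound $C$ that controls the tail term $2C\e$ and makes the transfer valid.
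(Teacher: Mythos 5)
Your proof is correct. The paper itself does not give an argument: it simply remarks that the lemma ``is an immediate consequence of the well-known Banach--Alaoglu Theorem'' and cites a functional analysis text. Your route is genuinely different in that it is self-contained: you prove sequential weak compactness directly via the uniform norm bound, a countable orthonormal basis, Bolzano--Weierstrass plus diagonalization, the density-and-boundedness upgrade, and Riesz representation. Each step is sound, and you are right to flag the third paragraph as the one needing care --- the uniform bound $C$ is exactly what lets convergence on the dense span of the basis propagate to all of $L^2$. Your approach also quietly repairs a small imprecision in the paper's one-line justification: Banach--Alaoglu by itself yields weak-$*$ \emph{compactness}, not \emph{sequential} compactness, so extracting a convergent subsequence additionally requires either the metrizability of the weak-$*$ topology on bounded sets of a separable space or the Eberlein--\v{S}mulian theorem, a subtlety you explicitly acknowledge and then avoid. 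The trade-off is length: the citation is one sentence, while your argument costs three paragraphs, but it is elementary and makes the role of separability and the uniform bound transparent.
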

The above lemma is an immediate consequence of the well-known Banach-Alaoglu Theorem \cite{conway2019course}. Since we will only use the notion of weak convergence in this paper to apply Lemma \ref{label:mazur-theorem} after invoking Lemma \ref{lemma:alaoglu} (see proof of Lemma \ref{lemma:conv-to-z-limit}), we will not formally review the definition of weak convergence here, but refer readers to \cite{conway2019course}.
The next result, commonly known as Mazur's theorem (or Mazur's lemma) allows us obtain strongly convergent sequence from a weakly convergent one \cite{conway2019course}. 
\begin{theorem}[Mazur's Theorem] \label{label:mazur-theorem}
Suppose that $\{v_j\}_{j\geq 1}$ is a sequence in $L^2([0,T]; \R^m)$ that converges weakly to some $\hat v$ in $L^2([0,T]; \R^m)$. 
Then for each $j$ there exist a positive integer $n_j\geq j$ and numbers $\alpha_{i,j} \in [0,1]$, $i=j,\ldots,n_j$ satisfying $\sum_{i=j}^{n_j} \alpha_{i,j} = 1$ such that the sequence $\{\hat v_j\}_{j\geq 1}$ defined by the convex combination
$$
\hat v_j = \sum_{i=j}^{n_j} \alpha_{i,j} \hat v_i
$$
converges to $\hat v$ in $L^2([0,T]; \R^m)$ as $j\to\infty$, i.e., $\int_0^T \|v_j(t) - \hat v(t)\|^2\dt\to 0$ as $j\to\infty$.
\end{theorem}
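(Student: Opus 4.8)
The plan is to derive Mazur's theorem from the basic Hilbert-space fact that the norm closure and the weak closure of a \emph{convex} set coincide. Since $L^2([0,T];\R^m)$ is a Hilbert space under $\langle u,w\rangle = \int_0^T \langle u(t),w(t)\rangle\dt$, this machinery is available. For each fixed $j$ I would introduce the convex hull of the tail, $C_j := \text{co}\{v_i : i \geq j\}$, and let $\overline{C_j}$ be its norm closure, a closed convex set. The theorem then reduces to the single claim $\hat v \in \overline{C_j}$ for every $j$: granting this, I can select for each $j$ an element $\hat v_j \in C_j$ with $\|\hat v_j - \hat v\|_{L^2} < 1/j$. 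By definition of the convex hull, any such $\hat v_j$ is a \emph{finite} convex combination of tail elements, which (padding unused indices with zero weight, permissible since the statement allows $\alpha_{i,j}\in[0,1]$) can be written as $\hat v_j = \sum_{i=j}^{n_j} \alpha_{i,j} v_i$ with $\alpha_{i,j}\in[0,1]$ and $\sum_{i=j}^{n_j}\alpha_{i,j}=1$. This is exactly the asserted form, and $\hat v_j \to \hat v$ strongly by construction.

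The heart of the matter is therefore the membership $\hat v \in \overline{C_j}$, and the main obstacle is converting weak convergence into norm information. I would argue by contradiction using the metric projection onto the closed convex set $\overline{C_j}$. Suppose $\hat v \notin \overline{C_j}$ and let $w := \Proj_{\overline{C_j}}(\hat v)$ be the unique nearest point. The variational characterization of the projection gives $\langle y - w,\, \hat v - w\rangle \leq 0$ for all $y \in \overline{C_j}$. Setting $\phi(u) := \langle u,\, \hat v - w\rangle$, this reads $\phi(y) \leq \phi(w)$ for all $y \in \overline{C_j}$, while $\phi(\hat v) = \phi(w) + \|\hat v - w\|^2 > \phi(w)$ because $\hat v \neq w$. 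In particular, since every $v_i$ with $i\geq j$ lies in $C_j \subseteq \overline{C_j}$, we have $\phi(v_i) \leq \phi(w) < \phi(\hat v)$ for all such $i$.

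Now $\phi$ is a bounded linear functional on $L^2$, so weak convergence $v_i \to \hat v$ forces $\phi(v_i) \to \phi(\hat v)$; passing to the limit in $\phi(v_i) \leq \phi(w)$ yields $\phi(\hat v) \leq \phi(w)$, contradicting $\phi(w) < \phi(\hat v)$. Hence $\hat v \in \overline{C_j}$ for every $j$, and the first paragraph finishes the proof. I expect the projection/variational-inequality step to be the only substantive point, and it is precisely where completeness and the inner-product structure of $L^2$ enter; equivalently, one could produce the separating functional $\phi$ via the Hahn--Banach separation theorem, which is the standard route in a general Banach space. The remaining items—extracting a finite convex combination from membership in a convex hull and reading off the coefficients $\alpha_{i,j}$—are routine bookkeeping.
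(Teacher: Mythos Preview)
Your argument is correct and is the standard Hilbert-space proof of Mazur's lemma. Note, however, that the paper does not supply its own proof of this statement: Mazur's theorem is stated as a background result and attributed to a reference, so there is no in-paper argument to compare against. Your proposal therefore goes beyond what the paper provides. One minor remark: the displayed formula in the statement has a typo (writing $\hat v_i$ where $v_i$ is intended), and your proof correctly uses the intended form $\hat v_j = \sum_{i=j}^{n_j}\alpha_{i,j} v_i$.
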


\subsection{Convergence to Critical Points: Analysis} \label{sec:CT-conv-to-cp}
We now prove Theorem \ref{thrm:conv-to-cp-CT-h}. 
We begin with the following lemma that shows convergence to the constraint set. 

\begin{lemma}[Convergence to Constraint Set] \label{lemma:cont-consensus}
Let $\vx$ be a solution to \eqref{eq:ODE1} and suppose that Assumptions \ref{a:h-loc-Lip}--\ref{a:Q-PSD1} and \ref{a:gamma_t} hold. Then $\vx(t)\to \calC$.
\end{lemma}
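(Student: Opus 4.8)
The plan is to establish convergence to the constraint set $\calC$ by controlling the quadratic penalty $x^\T Q x$ along trajectories. The key idea is that, after a suitable time change, the constraint-violation component decays because the penalty coefficient $\gamma_t$ grows without bound, while the gradient term $\partial h$ stays bounded (a consequence of coercivity confining the trajectory to a compact set, plus local Lipschitzness). First I would define the Lyapunov-type quantity $V(t) := \vx(t)^\T Q \vx(t) \geq 0$, which measures the squared deviation from $\calC$. Differentiating along the dynamics \eqref{eq:ODE1}, for almost every $t$ we have $\dot V(t) = 2\vx^\T Q \dot\vx \in 2\vx^\T Q(-\partial h(\vx) - \gamma_t Q\vx)$, so that for some selection $v \in \partial h(\vx)$,
\begin{equation}
\dot V(t) = -2\vx^\T Q v - 2\gamma_t \|Q\vx\|^2.
\end{equation}

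Next I would exploit coercivity (Assumption \ref{a:coercive-h}): since $h$ is coercive and $h(\vx(t))$ is essentially nonincreasing (the penalty only helps), the trajectory $\vx(t)$ remains in a compact set $K$ for all $t$, uniformly in the initial condition's sublevel set. On $K$, local Lipschitzness of $h$ bounds $\partial h$, so $\|v\| \leq C$ for a constant $C$, and the cross term satisfies $|2\vx^\T Q v| = |2 (Q\vx)^\T v| \leq 2C\|Q\vx\|$. Thus
\begin{equation}
\dot V(t) \leq 2C\|Q\vx\| - 2\gamma_t \|Q\vx\|^2.
\end{equation}
The decisive observation is that once $\gamma_t$ is large (which it eventually is, by Assumption \ref{a:gamma_t}), the negative quadratic term dominates whenever $\|Q\vx\|$ exceeds $C/\gamma_t$. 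Relating $\|Q\vx\|$ back to $V = \vx^\T Q \vx$ requires a spectral bound: on the complement of $\ker Q$, letting $\lambda_+$ denote the smallest positive eigenvalue of $Q$, one has $\|Q\vx\|^2 \geq \lambda_+ \, \vx^\T Q \vx = \lambda_+ V$ for the component of $\vx$ orthogonal to $\ker Q$. This yields a differential inequality of the form $\dot V \leq c_1\sqrt{V} - 2\gamma_t \lambda_+ V$, which forces $V(t) \to 0$ as $t \to \infty$ since $\gamma_t \to \infty$; a standard comparison argument (or Grönwall-type estimate) then gives $V(t) \to 0$, i.e. $d(\vx(t), \calC) \to 0$.

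I expect the main obstacle to be the nonsmooth, differential-inclusion nature of \eqref{eq:ODE1}: the chain-rule step $\dot V(t) = 2\vx^\T Q \dot\vx$ must be justified carefully, since $\vx$ is only absolutely continuous and $\dot\vx$ is a measurable selection from a set-valued map holding only almost everywhere. Fortunately $V$ is a smooth (quadratic) function of $\vx$, so the ordinary chain rule for the composition of a $C^1$ function with an absolutely continuous curve applies directly, and one does not need the more delicate Assumption \ref{a:chain-rule-h} here (that assumption is reserved for handling $h$ itself, which is nonsmooth). A secondary technical point is making the ``eventually large $\gamma_t$'' argument uniform: since $\gamma_t \to \infty$ but may oscillate on early compact intervals (Assumption \ref{a:gamma_t} only bounds it on compacta), I would fix $T$ large enough that $\gamma_t \geq$ any prescribed threshold for $t \geq T$, handle $[0,T]$ by noting $V$ stays bounded there, and run the decay estimate on $[T,\infty)$. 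Decomposing $\vx$ into its $\ker Q$ component (on which $V$ and $Q\vx$ vanish) and its orthogonal complement, and invoking the positive-definiteness of $Q$ restricted to that complement, is the clean way to obtain the spectral inequality $\|Q\vx\|^2 \geq \lambda_+ V$ that closes the argument.
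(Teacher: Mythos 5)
Your proposal is correct and follows essentially the same route as the paper: the paper also confines the trajectory to a compact set via coercivity, bounds $\partial h$ there by local Lipschitzness, and runs a Lyapunov decay estimate on the squared off-constraint component (it uses $\|\vx_{nc}\|^2$ in coordinates where $Q$ is block-diagonal, which is your $V=\vx^\T Q\vx$ up to the spectral constants you introduce), choosing $T$ so that $\gamma_t$ dominates the cross term for $t\geq T$ and then sending $\e\to 0$. Your added care about the chain rule for the smooth quadratic $V$ along an absolutely continuous trajectory is a correct and worthwhile clarification, not a deviation.
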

We note that Assumption \ref{a:zero-measure-cp} is not needed for this result---it is only required to obtain convergence to critical points. 
In the proof of Lemma \ref{lemma:cont-consensus}, we will use the following conventions. 
Consistent with \eqref{eq:C-rotation} and Assumption \ref{a:Q-PSD1}, assume $Q$ is block diagonal with form
\begin{equation} \label{eq:Q-diag-form}
Q = \begin{pmatrix}
0 & 0\\
0 & \widehat Q
\end{pmatrix}
\end{equation}
where $\widehat Q\in\R^{(M-d)\times (M-d)}$ is positive definite and here $0$ denotes a zero matrix of appropriate dimension.
Let $\vx(t)$ be decomposed as
\begin{equation} \label{eq-x-ct-decomp}
\vx(t) =
\begin{pmatrix}
\vx_c(t)\\
\vxperp(t)
\end{pmatrix},
\end{equation}
where $\vx_c(t)\in\R^d$ and $\vxperp(t) \in \R^{M-d}$.

We now prove Lemma \ref{lemma:cont-consensus}.
\begin{proof}
By Assumption \ref{a:coercive-h} there exists some bounded set $K\subset\R^{M}$ such that, regardless of initialization, solutions to \eqref{eq:ODE1} reach $K$ and remain in $K$ thereafter. 
Thus, without loss of generality we may consider solutions to \eqref{eq:ODE1} initialized in $K$. 

Let $\e>0$, let $\overline{M}=\sup \{ \|v\|: v\in \partial h(x),~x\in K\}$, and let $\lambda_{\textup{min}}>0$ be the smallest eigenvalue of $\widehat Q$. Since $\gamma_t\to\infty$ we may choose some $T$ such that $\gamma_t \geq \frac{\frac{1}{\e} +\overline{M}}{\lambda_{\text{min}}\e}$ for all $t\geq T$. Using \eqref{eq:ODE1} we see that when $\|\vx_{nc}\|\geq \e$ we have $\ddt \|\vx_{nc}\|^2 \geq 1$. Thus, $\|\vx_{nc}(t)\| \leq \e$ after some finite time. Sending $\e\to 0$ completes the proof. 
\end{proof}

The remainder of this section will focus on proving convergence to critical points of $h\vert_\C$. Informally, we will prove the result by using use $h\vert_\C$ as a type of Lyapunov function. (More precisely, $h\vert_\C$ acts asymptotically as a Lyapunov function) We proceed as follows. First, we will define several important concepts that will be required in the proofs. Next,
Lemma \ref{lemma:conv-to-z-limit} will show that as $t\to\infty$, $\vx_c(t)$ asymptotically resembles the solution of a gradient-descent differential inclusion for $h\vert_\C$. 
Lemma \ref{lemma:conv-of-potential} will show that the Lyapunov function values $h\vert_\C(\vx_c(t))$ have a limit as $t\to\infty$. Finally, Lemma \ref{lemma:CT-conv-to-cp} will show convergence to critical points. Theorem \ref{thrm:conv-to-cp-CT-h} follows immediately from Lemmas \ref{lemma:cont-consensus} and \ref{lemma:CT-conv-to-cp}.

We now give several important definitions required through the remainder of the section.
Given $\tau\geq 0$, and a solution $\vx$ of \eqref{eq:ODE1}, define $\vx^\tau:[0,\infty)\to \R^M$ to be the \emph{shifted} solution curve
$$
\vx^\tau(t) := \vx(\tau + t).
$$
Note that $\vx^{\tau}$ captures the ``tail'' of $\vx$ after time $\tau$.

\begin{lemma} \label{lemma:conv-to-z-limit}
Suppose that Assumptions \ref{a:h-loc-Lip}--\ref{a:Q-PSD1} and \ref{a:chain-rule-h}--\ref{a:gamma_t} hold, and let $\vx(t) = (\vx_c(t),\vx_{nc}(t))$ be a solution to \eqref{eq:ODE1}.
Let $\{\tau_j\}_{j\geq 1}$ be a real-valued sequence of times satisfying $\tau_j\to\infty$. Given any $T>0$, there exists a subsequence of $\{\vx_c^{\tau_j}\}_{j\geq 1}$ that converges uniformly on the interval $[0,T]$ to some function $\vz:[0,\infty)\to \R^{d}$ satisfying
$$
\dot \vz(t) \in \partial h\vert_\C(\vz(t))
$$
for almost every $t\in[0,T]$.
\end{lemma}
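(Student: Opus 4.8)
The plan is to run the standard limiting-dynamics (``ODE method'') argument from the theory of perturbed differential inclusions, exploiting the fact that in the rotated coordinates \eqref{eq:C-rotation}, \eqref{eq:Q-diag-form} the penalty term acts only on the non-constraint coordinates. Writing \eqref{eq:ODE1} with a measurable selection $v(t)=(v_c(t),v_{nc}(t))\in\partial h(\vx(t))$, we have $\dot\vx_c(t)=-v_c(t)$ and $\dot\vx_{nc}(t)=-v_{nc}(t)-\gamma_t\widehat Q\,\vx_{nc}(t)$ for a.e.\ $t$. The crucial structural observation is that $\gamma_t$ never enters the $\vx_c$-equation, so $\vx_c$ is driven purely by (minus) the projected generalized gradient $\partial_{x_c}h$, and this is the component whose shifts we must control.

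First I would set up the compactness needed for Arzel\`a--Ascoli. As established in the proof of Lemma \ref{lemma:cont-consensus}, coercivity (Assumption \ref{a:coercive-h}) confines the trajectory to a compact set $K$, on which local Lipschitzness (Assumption \ref{a:h-loc-Lip}) bounds the generalized gradient, $\overline M:=\sup\{\|v\|:v\in\partial h(x),\,x\in K\}<\infty$. Hence $\|\dot\vx_c(t)\|=\|v_c(t)\|\le\overline M$ a.e., so $\vx_c$ and every shift $\vx_c^{\tau_j}$ is $\overline M$-Lipschitz and uniformly bounded on $[0,T]$. By Arzel\`a--Ascoli, pass to a subsequence (not relabeled) with $\vx_c^{\tau_j}\to\vz$ uniformly on $[0,T]$, where $\vz$ is Lipschitz, hence absolutely continuous.

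Next I would identify the limiting velocity. The derivatives $\dot\vx_c^{\tau_j}=-v_c(\tau_j+\cdot)$ are bounded in $L^2([0,T];\R^d)$, so by Lemma \ref{lemma:alaoglu} a further subsequence converges weakly in $L^2$ to some $\hat v$; since $\vx_c^{\tau_j}\to\vz$ uniformly, testing against smooth functions identifies $\hat v=\dot\vz$. To upgrade this to the pointwise set-membership $\dot\vz(t)\in-\partial_{x_c}h(\vz(t),0)$ (which is the content of the stated inclusion for $h\vert_\C$), I would invoke Mazur's theorem (Theorem \ref{label:mazur-theorem}) to produce convex combinations $\hat v_j=\sum_{i=j}^{n_j}\alpha_{i,j}\,\dot\vx_c^{\tau_i}$ converging to $\dot\vz$ strongly in $L^2$, hence pointwise a.e.\ along a subsequence. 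Each summand lies in $-\partial_{x_c}h(\vx(\tau_i+t))$, and for the active indices $i\ge j$ we have $\vx_c(\tau_i+t)\to\vz(t)$ (uniform convergence) together with $\vx_{nc}(\tau_i+t)\to 0$ (Lemma \ref{lemma:cont-consensus}). Upper semicontinuity of $\partial h$ (Definition \ref{def:upper-semicont}), combined with the convexity and compactness of the generalized gradient, then forces each convex combination into an arbitrarily small neighborhood of the closed convex set $-\partial_{x_c}h(\vz(t),0)$, so its a.e.\ pointwise limit $\dot\vz(t)$ lies in that set. A diagonal argument over $T=1,2,\ldots$ finally yields a single $\vz$ on $[0,\infty)$ with the inclusion holding a.e.

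I expect the last step --- passing the \emph{set-valued} inclusion through the limit --- to be the main obstacle, since weak $L^2$ limits need not respect pointwise set membership and the subgradients are evaluated at the moving argument $(\vx_c(\tau_i+t),\vx_{nc}(\tau_i+t))$ rather than at $(\vz(t),0)$. The interplay of Mazur's theorem (to regain strong, hence a.e., convergence), upper semicontinuity (to absorb the argument perturbation, crucially using $\vx_{nc}\to 0$ from Lemma \ref{lemma:cont-consensus}), and the convexity and closedness of $\partial h$ (to keep the convex combinations inside the target set) is precisely what resolves this difficulty.
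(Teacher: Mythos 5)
Your proposal is correct and follows essentially the same route as the paper's proof: equicontinuity from coercivity plus the local Lipschitz bound, Arzel\`a--Ascoli for uniform convergence of the shifts, Banach--Alaoglu and Mazur's theorem to recover strong $L^2$ convergence of the velocities, and upper semicontinuity plus convexity of $\partial h$ (together with $\vx_{nc}\to 0$) to pass the differential inclusion to the limit. Your explicit handling of the sign (writing the limit inclusion as $\dot\vz\in-\partial_{x_c}h(\vz,0)$) and the closing diagonal argument over $T$ are minor tidying touches, not a different approach.
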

\begin{proof}
The proof of this result is similar to the proof of Theorem 4.2 in \cite{benaim2005stochastic}. 

Let $\vx(t)$ be decomposed as in \eqref{eq-x-ct-decomp}. 
Let $T>0$ and consider the family of functions obtained by shifting $\vx_c$ by $\tau_j$ and restricting to  the interval $[0,T]$, i.e., the set of functions $\vx_c^{\tau_j}:[0,T]\to\R^d$, $j\geq 1$.  By \eqref{eq:ODE1} and \eqref{eq:Q-diag-form} we have
\begin{equation} \label{eq:x_c-stuff}
\vx_c^{\tau_j}(t) = \vx_c^{\tau_j}(0) + \int_0^t v_{j}(s)\ds,  
\end{equation}
where $v_{j}(s) \in \partial_{x_c} h(\vx^{\tau_j}(s))$. 
By Assumption \ref{a:coercive-h}, $\vx(t)$ remains in some compact set $K$. By Assumption  \ref{a:h-loc-Lip} there exists an $L>0$ such that $\frac{\|h(x) - h(y)\|}{\|x-y\|}\leq L$ for all $x,y\in K$. By Definition \ref{def:gen-grad} we have $\|v\|\leq L$ for all $v\in \partial h(x)$ and all $x\in K$. 
Thus $\|v_{j}(s)\|$ above is uniformly bounded for all $j$ and $s$, and hence $\{\vx_c^{\tau_j}\}_{j\geq 1}$ is an equicontinuous family of functions. By the Arzela-Ascoli theorem \cite{rudin1964principles}, there exits a subsequence of $\{\vx_c^{\tau_j}\}_{j\geq 1}$ converging uniformly to some function $\vz:[0,T]\to\R^d$. Without loss of generality, we will assume henceforth that the entire sequence $\{\vx_c^{\tau_j}\}_{j\geq 1}$ is identical to this subsequence so that 
$$
\vx_c^{\tau_j}(t) \to \vz(t),
$$
uniformly for $t\in[0,T]$ as $j\to\infty$.

Recalling \eqref{eq:x_c-stuff}, for $t\in [0,T]$ we obtain
\begin{align} \label{eq:z-chain}
    \vz(t) = \lim_{j\to\infty} \vx_c^{\tau_j}(t)  = \vz(0) + \lim_{j\to\infty} \int_0^t v_j(\tau)\dtau.
\end{align}
Note that, restricted to the interval $[0,T]$, $\{v_j\}_{j\geq 1}$ is a bounded sequence in $L^2$. By Lemma 
\ref{lemma:alaoglu}, there is a subsequence of $\{v_{j_\ell}\}_{\ell\geq 1}$ with weak limit $\hat v$ in $L^2([0,T]; \R^d)$. Without loss of generality assume that $\{v_j\}_{j\geq 1}$ is identical to this subsequence.
By Theorem \ref{label:mazur-theorem}, we see that there exists a sequence $\hat v_j(t)$ converging strongly to $\hat v(t)$ where 
$$
\hat v_j(t) = \sum_{i=j}^{n_j} v_j(t). 
$$
But since $v_j(t)\in \partial_{x_c} h(\vx_c^{\tau_j}(t),\vx^{\tau_j}(t))$ and $(\vx_c^{\tau_j}(t), \vx^{\tau_j}(t))\to (\vz(t),0)$ as $j\to\infty$, by the fact that $\partial h$ is upper semicontinuous (see Definition \ref{def:upper-semicont}) and convex we see that the the limit $\hat v(t)$ belongs to $\partial h\vert_\C(\vz(t))$. 
Thus we have that
$$
\vz(t) = \vz(0) + \int_0^t \hat v(\tau)\dtau,
$$
where $v(\tau) \in \partial h\vert_\C(\vz(\tau))$. 
\end{proof}

\begin{lemma} \label{lemma:conv-of-potential} Suppose Assumptions \ref{a:h-loc-Lip}--\ref{a:gamma_t} hold, and $\vx(t) = (\vx_c(t),\vx_{nc}(t))$ is a solution to \eqref{eq:ODE1}. Then $h\vert_\C(\vx_c(t))$ converges to a limit as $t\to\infty$. 
\end{lemma}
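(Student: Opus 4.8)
The plan is to show that $\psi(t) := h\vert_\C(\vx_c(t))$ cannot oscillate, by playing the genuine trajectory against the limiting gradient flow furnished by Lemma \ref{lemma:conv-to-z-limit}. First I would record the structural facts. By Assumption \ref{a:coercive-h} the solution of \eqref{eq:ODE1} remains in a compact set $K$, so $\vx_c(t)$ stays in a compact set $K_c$ and, since $h$ is continuous, $\psi$ is continuous and bounded. Set $m := \liminf_{t\to\infty}\psi(t)$ and $M := \limsup_{t\to\infty}\psi(t)$. It suffices to prove $m = M$, so I suppose toward a contradiction that $m < M$.

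The engine of the argument is the Lyapunov behavior of the limit flow. From the chain rule (Assumption \ref{a:chain-rule-h}), along any solution $\vz$ of the limiting inclusion $\dot\vz \in -\partial h\vert_\C(\vz)$ one has $\tfrac{d}{dt} h\vert_\C(\vz(t)) = -\|\dot\vz(t)\|^2 \le 0$, so $h\vert_\C$ is nonincreasing along the limit flow and strictly decreasing off the critical set. To make the descent quantitative I would isolate a band of noncritical values. Since $\partial h\vert_\C$ is upper semicontinuous, $\text{CP}_{h\vert_\C}$ is closed, so $\text{CP}_{h\vert_\C}\cap K_c$ is compact; and Assumption \ref{a:zero-measure-cp} states that $h\vert_\C(\text{CP}_{h\vert_\C})$ has empty interior, whence $h\vert_\C(\text{CP}_{h\vert_\C}\cap K_c)$ is closed and nowhere dense. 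Thus I can choose $a<b$ with $[a,b]\subset(m,M)$ such that the compact band $B:=\{x\in K_c:\, a\le h\vert_\C(x)\le b\}$ contains no critical point. On $B$ the minimal-norm element of $\partial h\vert_\C$ is bounded below by some $\delta>0$ (lower semicontinuity of the minimal norm plus compactness), so along the limit flow $\tfrac{d}{dt}h\vert_\C(\vz(t))\le-\delta^2$ whenever $\vz(t)\in B$.

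The contradiction then comes from comparing up-crossings of the band with the limit flow. Because $m<a<b<M$ and $\psi$ is continuous, there exist intervals $[s_j,\sigma_j]$ with $s_j\to\infty$, $\psi(s_j)=a$, $\psi(\sigma_j)=b$, and $\psi([s_j,\sigma_j])\subseteq[a,b]$. I would first show the durations $\sigma_j-s_j$ are bounded: otherwise, passing to a subsequence along which $\sigma_j-s_j\to\infty$, for every fixed $T$ the shifted curves $\vx_c^{s_j}$ lie in $B$ on $[0,T]$ for all large $j$, and Lemma \ref{lemma:conv-to-z-limit} yields a subsequence converging uniformly on $[0,T]$ to a limit-flow solution $\vz$ with $\vz([0,T])\subseteq B$ (as $B$ is closed) and $h\vert_\C(\vz(0))=a$; integrating the rate $\le-\delta^2$ forces $h\vert_\C(\vz(T))\le a-\delta^2 T<a$, contradicting $\vz(T)\in B$. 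With the durations bounded by some $T_0$, I apply Lemma \ref{lemma:conv-to-z-limit} on $[0,T_0]$, extract $\vx_c(s_j)\to p$ and $\sigma_j-s_j\to\theta\in[0,T_0]$, and obtain a limit-flow solution $\vz$ with $\vz(0)=p$; uniform convergence gives $h\vert_\C(\vz(\theta))=\lim_j\psi(\sigma_j)=b$, whereas monotonicity gives $h\vert_\C(\vz(\theta))\le h\vert_\C(p)=a<b$. This contradiction shows $m=M$, so $h\vert_\C(\vx_c(t))$ converges.

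The main obstacle is the mismatch between the true trajectory and the limit flow: Lemma \ref{lemma:conv-to-z-limit} delivers only subsequential uniform limits on fixed finite intervals, so both the boundedness-of-durations step and the final comparison must be phrased as limiting statements rather than read off $\vx_c$ directly, and the argument has to be organized so that every use of the Lyapunov inequality is transferred onto a genuine limit-flow solution. A secondary technical point is the extraction of the noncritical band, which relies on upgrading Assumption \ref{a:zero-measure-cp} (density of the noncritical values) to nowhere-density of the closed set $h\vert_\C(\text{CP}_{h\vert_\C}\cap K_c)$, together with the quantitative descent $\tfrac{d}{dt}h\vert_\C(\vz)\le-\delta^2$ on $B$, which is precisely where Assumption \ref{a:chain-rule-h} is indispensable.
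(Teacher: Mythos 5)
Your argument is correct and follows essentially the same route as the paper: both proofs rule out oscillation of $h\vert_\C(\vx_c(t))$ by extracting, via Lemma \ref{lemma:conv-to-z-limit}, a limit-flow solution along a sequence of band/level up-crossings and contradicting the strict descent guaranteed by Assumption \ref{a:chain-rule-h} at noncritical values selected using Assumption \ref{a:zero-measure-cp}. The only differences are cosmetic — the paper normalizes $\liminf = 0$ and works with the sublevel sets $L_\e \subset L_{2\e}$ and a minimum crossing time, whereas you work with a noncritical band $[a,b]\subset(\liminf,\limsup)$, a quantitative descent rate $-\delta^2$, and an explicit bounded-durations step — and your reading of the chain rule as yielding $\tfrac{d}{dt}h\vert_\C(\vz) = -\|\dot\vz\|^2$ is the same implicit strengthening the paper itself relies on.
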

The proof of this lemma follows similar ideas to Section 3.3 in \cite{davis2020stochastic} which treats the classical centralized case in discrete time. The proof here handles the nontraditional subspace-constrained optimization framework in continuous time.
\begin{proof}
Note that, by Assumptions \ref{a:h-loc-Lip}--\ref{a:coercive-h}, $h\vert_\C$ is bounded from below. 
Without loss of generality, assume $\lim\inf_{t\to\infty} h\vert_\C(\vx_c(t)) = 0$.
Let $\e>0$ be a noncritical value of $h\vert_\C$ (i.e., $0\not\in \partial h\vert_\C(z)$ for any $z$ such that $h\vert_\C(z) = \e$). By Assumption \ref{a:zero-measure-cp} we may choose such an $\e$ to be arbitrarily close to zero.

Given $r\geq0$ define the $r$-sublevel set
$$
L_r := \{y\in\R^d: h\vert_\C(y) \leq r\}. 
$$
Note that $L_\e$ and $L_{2\e}$ are well separated in the sense that 
\begin{equation} \label{eq:Lr-thick-set}
\inf\{\|w-v\|:w\in K\cap L_\e, v\notin L_{2\e}\}>0.
\end{equation}
(See \cite{davis2020stochastic}, proof of Claim 1 in the proof of Proposition 3.5.) 
By hypothesis, $\vx_c(t)$ enters and exits $L_{\e}$ and $L_{2\e}$ infinitely often. Define the time $t_1$ to be the first time $t$ where the following two conditions hold:
\begin{itemize}
\item [1.] $\vx_c(t)$ exits $L_\e$ at time $t_1$, i.e., $\vx(t_1)\in L_\e$ and
$$
\vx(t_1+ \tau) \not\in L_\e 
$$
for all $\tau> 0$ sufficiently small.
\item [2.] After time $t_1$, $\vx_c(t)$ exits $L_{2\e}$ before returning to $L_{\e}$.
\end{itemize}
In other words, $t_1$ is the last time $\vx_c(t)$ exits $L_\e$ before leaving $L_{2\e}$.
Having defined $t_1$, define $t_{\text{exit},1}$ to be the first time that $\vx_c(t)$ exits $L_{2\e}$ after $t_1$. For $j\geq 2$, iteratively define $t_j$ and $t_{\text{exit},j}$ in a similar manner and note that
$$
t_1 < t_{1,\textup{exit}} < t_2 < t_{2,\textup{exit}} < \cdots.
$$
This iterative procedure for constructing $t_j$ and $t_{j,\textup{exit}}$ terminates after a finite number of iterations for arbitrary $\e>0$ if and only if 
$h\vert_\C(\vx_c(t))$ converges to a limit. 

We will now show that the process must terminate after a finite number of iterations. For the sake of contradiction, suppose to the contrary that the assertion is false. 

Note that by Assumption \ref{a:coercive-h}, for each initialization, there is some compact set $K$ such that $\vx(t)\in K$ for all $t\geq 0$. By Assumption \ref{a:h-loc-Lip}, we have $\sup\{\|v\|:~v\in \partial_{x_c} h(x), ~x\in K\}< \infty$ (see proof of Lemma \ref{lemma:conv-to-z-limit}). Since $L_{2\e}\backslash L_{\e}$ is a ``thick'' set, i.e., \eqref{eq:Lr-thick-set} holds, it follows that there is some minimum time $t_{\textup{cross}}>0$ such that $t_{j,\textup{exit}} - t_j \geq t_{\textup{cross}}$ for all $j$. 
Thus our sequence of times $\{t_j\}_{j\geq 1}$ satisfying conditions 1--2 above, also satisfies $t_j\to\infty$.

Consider the sequence $\{\vx^{t_j}_c(t)\}_{j\geq 1}$. Let $T>0$. By the previous theorem, there exists a subsequence converging to $\vz(t)$ satisfying $\dot \vz(t) \in \partial h\vert_\C(\vz(t))$ for almost all $t\in [0,T]$. By construction, we have $h\vert_\C(\vz(0)) = \e$. 
Moreover, by our choice of $\e$, we have
$0\notin \partial h\vert_\C(\vz(0))$. By the upper semicontinuity of $\partial h\vert_\C$ (Definition \ref{def:upper-semicont}), we have that $0\not\in \partial h\vert_\C(\vz(0))$ for all $z$ in an open ball about $\vz(0)$. Using Assumption \ref{a:chain-rule-h}, it follows that 
\begin{equation} \label{eq:descent}
h\vert_\C(\vz(T)) < \sup_{t\in [0,T]} h\vert_\C(\vz(t))\leq h\vert_\C(\vz(0)) = \e.
\end{equation}
Let $\delta = \frac{1}{2}(h\vert_\C(\vz(T)) - h\vert_\C(\vz(0)))$. 
For $J>0$ sufficiently large, we may make $\sup_{t\in[0,T]} \|\vx^{t_j}(t) - \vz(t)\|$ arbitrarily small for all $j\geq J$. By continuity of $h$, we may thus make $\sup_{t\in [0,T]} \|h\vert_\C(\vx^{t_j}(t)) - h\vert_\C(\vz(t))\| < \delta$ for all $j\geq J$. 
Hence, $\vx^{t_j}(t)$ belongs to $L_\e$ at time $t_j+T$. But, by construction of $t_j$, $\vx^{t_j}(t)$ must exit $L_{2\e}$ before re-entering $L_\e$. This is impossible since, for all $j$ sufficiently large we have
\begin{align}
\sup_{t\in [0,T]} h\vert_\C(\vx_c^{t_j}(t)) \leq & \sup_{t\in [0,T]} h\vert_\C(\vz(t)) \\
& + \sup_{t\in [0,T]} \Big|h\vert_\C(\vx_c^{t_j}(t)) - h(\vz(t))\Big|\\
\leq & ~ 2\e,
\end{align}
where the second inequality follows from \eqref{eq:descent} and the uniform convergence of $\vx^{\tau_j}$ to $\vz$ on $[0,T]$.
Thus, the iterative procedure outlined above terminates after finite iterations and $\limsup_{t\to\infty} h\vert_\C(\vx_c(t)) = 0$. 

\end{proof}

\begin{lemma} \label{lemma:CT-conv-to-cp}
Suppose Assumptions \ref{a:h-loc-Lip}--\ref{a:gamma_t} hold and $\vx(t) = (\vx_c(t),\vx_{nc}(t))$ is a solution to \eqref{eq:ODE1}. Then $\vx_c(t)$ converges to the set of critical points of $h\vert_{\C}$.
\end{lemma}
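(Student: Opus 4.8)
The plan is to argue by contradiction, using the convergence of the Lyapunov values established in Lemma \ref{lemma:conv-of-potential} as the engine of the argument. By that lemma, $h\vert_\C(\vx_c(t))$ converges to some limit $c$ as $t\to\infty$. Suppose, for contradiction, that $\vx_c(t)$ does \emph{not} converge to the set $\text{CP}_{h\vert_\C}$. Then there exist an $\e>0$ and a sequence of times $\tau_j\to\infty$ such that $d(\vx_c(\tau_j),\text{CP}_{h\vert_\C})\geq \e$ for all $j$.

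First I would extract a limiting trajectory. Fix any $T>0$ and apply Lemma \ref{lemma:conv-to-z-limit} to the sequence $\{\tau_j\}$: after passing to a subsequence, $\vx_c^{\tau_j}$ converges uniformly on $[0,T]$ to a curve $\vz$ solving the limiting gradient-descent inclusion $\dot\vz(t)\in -\partial h\vert_\C(\vz(t))$. Since $\vz(0)=\lim_j \vx_c(\tau_j)$, continuity of the distance function gives $d(\vz(0),\text{CP}_{h\vert_\C})\geq \e$; in particular $0\notin \partial h\vert_\C(\vz(0))$, so $\vz(0)$ is not a critical point.

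The key step is to show that $h\vert_\C$ \emph{strictly} decreases along $\vz$, exactly as in the derivation of \eqref{eq:descent}. Because $0\notin \partial h\vert_\C(\vz(0))$ and $\partial h\vert_\C$ is upper semicontinuous (Definition \ref{def:upper-semicont}), we have $0\notin \partial h\vert_\C(y)$ for all $y$ in a ball about $\vz(0)$, so $\dot\vz(t)\neq 0$ on an initial interval of positive length. Invoking the chain rule (Assumption \ref{a:chain-rule-h}) along the absolutely continuous curve $\vz$ and combining it with $-\dot\vz(t)\in \partial h\vert_\C(\vz(t))$ yields $\ddt h\vert_\C(\vz(t)) = -\|\dot\vz(t)\|^2$ for almost every $t$, whence $h\vert_\C(\vz(T)) < h\vert_\C(\vz(0))$.

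Finally I would close the contradiction using the conserved limit value. By uniform convergence $\vx_c^{\tau_j}\to \vz$ on $[0,T]$ and continuity of $h\vert_\C$, we have $h\vert_\C(\vz(0)) = \lim_j h\vert_\C(\vx_c(\tau_j)) = c$ and $h\vert_\C(\vz(T)) = \lim_j h\vert_\C(\vx_c(\tau_j + T)) = c$, since both $\tau_j$ and $\tau_j+T$ tend to infinity and $h\vert_\C(\vx_c(\cdot))\to c$. This contradicts the strict inequality $h\vert_\C(\vz(T)) < h\vert_\C(\vz(0))$, completing the argument. The main obstacle is the strict-descent step: one must correctly pass the chain rule through the set-valued inclusion to obtain the identity $\ddt h\vert_\C(\vz(t)) = -\|\dot\vz(t)\|^2$ (so that the chain-rule inner product is evaluated against the driving subgradient), and then ensure the decrease is genuinely strict via upper semicontinuity. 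Once the limit $c$ from Lemma \ref{lemma:conv-of-potential} is in hand, the contradiction is immediate, and notably no quantitative or uniform-in-$j$ descent estimate is required.
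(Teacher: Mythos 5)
Your proposal is correct and follows essentially the same route as the paper's proof: extract a limiting curve via Lemma \ref{lemma:conv-to-z-limit} along times where the trajectory stays away from the critical set, use upper semicontinuity of $\partial h\vert_\C$ together with the chain rule (Assumption \ref{a:chain-rule-h}) to get strict descent of $h\vert_\C$ along that curve, and contradict the convergence of $h\vert_\C(\vx_c(t))$ from Lemma \ref{lemma:conv-of-potential}. The only cosmetic difference is that the paper phrases the contradiction in terms of a non-critical limit point $x^*$ rather than a sequence bounded away from the critical set, which is equivalent given the compactness supplied by coercivity.
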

\begin{proof}
Suppose that $x^*\in \R^d$ is a limit point of $\vx_c(t)$, but that $0\not\in \partial h\vert_\C(x^*)$. Choose times $\{\tau_j\}_{j\geq 1}$, $\tau_j\to\infty$, such that $\vx(\tau_j)\to x^*$. Let $T>0$. Then there exists some function $\vz(t)$ such that $\vx^{\tau_j}(t)\to \vz(t)$ uniformly for $t\in [0,T]$ and $\dot \vz(t) \in \partial h\vert_\C(\vz(t))$. Note that, by construction, $\vz(0) = x^*$. Recalling that $0\not\in \partial h\vert_\C(x^*)$, by the upper semicontinuity of $\partial h\vert_\C$, we have that $0\not\in \partial h\vert_\C(z)$ for all $z$ in an open ball about $x^*$. Using Assumption \ref{a:chain-rule-h}, it follows that
$$
h\vert_\C(\vz(t)) < h(x^*)
$$
for any $t>0$.
However, for $t\in (0,T]$ we see that 
\begin{align}
h\vert_\C(\vz(t)) = & \lim_{j\to\infty} h\vert_\C(\vx^{\tau_j}_c(t))\\
= & \lim_{s\to\infty} h\vert_\C(\vx(s)) = h\vert_\C(x^*),
\end{align}
where the last two equalities follow by Lemma \ref{lemma:conv-of-potential}. This contradicts our hypothesis. Hence, if $x^*$ is a limit point of $x_c(t)$, then $0\in \partial h\vert_\C(x^*)$.

\end{proof}

\begin{example} [Necessity of Coercivity]
The following example demonstrates that if we do not assume that $h$ is coercive, then Lemma \ref{lemma:cont-consensus} may fail to hold (i.e., $\vx(t)\not\to \C$, or equivalently, agents don't converge to consensus in DGF). 

Let $h:\R^2 \to \R$ and $Q\in \R^{2\times 2}$ be given by
$$
h(x) =
-\frac{1}{2}x^\T
\begin{pmatrix}
0 & 1\\
1 & 1
\end{pmatrix}
x \quad \text{ and } \quad Q =
\begin{pmatrix}
1 & 0\\
0 & 0
\end{pmatrix}
$$
so that $\calC = \{x\in\R^2: x_1 = 0 \}$.
Then the ODE \eqref{eq:ODE1} is given by
$$
\dot \vx =
\begin{pmatrix}
- \gamma_t & 1\\
1 & 1
\end{pmatrix}
\vx.
$$
Note that the first quadrant is an invariant set under these dynamics. Let $x_0 = (1,1)$ and suppose that $\gamma_t = t$. Then we have $\dot \vx_2(t) \geq \vx_2(t)$ and using a Gronwall type argument we have $\vx_2(t) \geq e^{t}$ for all $t\geq 0$. Thus we have
\begin{align}
\dot \vx_1(t) & = -t\vx_1(t) + \vx_2(t)\\
& \geq -t\vx_1(t) + e^{t}.
\end{align}
Suppose that $\vx_1(t)\to 0$ (i.e., $\vx\to\C$). Then for any $\e>0$ there exists a $T\geq 0$ such that $\vx_1(t) < \e$ for all $t\geq T$. Thus,
\begin{align}
\dot \vx_1(t) & \geq -t\e + e^{t}>0
\end{align}
for all $t\geq T$ sufficiently large, which contradicts the supposition that $\vx_1(t)\to 0$. On the other hand, it is worth noting that if we suppose $\gamma_t$ increases faster than $e^{t}$ then it can be shown that $\vx(t) \to\C$.
\end{example}

\section{Stable-manifold Theorem} \label{sec:CT-stable-manifold}
In this section we will establish the stable-manifold theorem for DGF. More precisely, we will prove a stable-manifold theorem for the general ODE \eqref{eq:ODE1} which will imply Theorem \ref{thrm:non-convergence-saddles}.

Let $x^*$ be a saddle point of interest. We will make the following assumptions.\footnote{Because the analysis in this section will take place locally under Assumption \ref{a:h-C2}, we will use the standard gradient $\nabla h$ rather than the generalized gradient.}
\begin{newassumption} \label{a:h-C2}
$h$ is of class $C^2$ in a neighborhood of $x^*$.
\end{newassumption}

We will also make the following technical assumptionn which states that the eigenvectors of $h$ are continuous near saddle points (see the discussion near Assumption \ref{a:eigvec-continuity} for situations where this assumption holds).
\begin{newassumption}\label{a:eigvec-continuity-h}
Let $x^*$ be a regular saddle point of $h$. Assume that the eigenvectors of $\Hess h(x)$ are continuous near $x^*$ in the sense that for each $x$ near $x^*$, there exists an orthonormal matrix $U(x)$ that diagonalizes $\Hess h(x)$ such that $x\mapsto U(x)$ is continuous at $x^*$.
\end{newassumption}
We reiterate that this assumption is relatively mild and should be satisfied by most functions encountered in practice. It is required to rule out certain pathological cases, as illustrated next. 

\begin{example} \label{example-eigval-problem}
Consider the scalar function on $\R^2$ given (in polar coordinates) by
\[
f(r,\theta) = e^{-1/r^2} \cos(\theta)
\]
We note that $f$ is $C^\infty$ but not analytic. One may compute (for $r$ small)
\[
\begin{pmatrix}
\partial_{rr} f & \partial_{r\theta} f\\ \partial_{\theta r} f & \partial_{\theta \theta} f
\end{pmatrix} \approx e^{-1/r^2}\begin{pmatrix} \frac{4\cos(\theta)}{r^6} & -\frac{2 \sin(\theta)}{r^3} \\ -\frac{2 \sin(\theta)}{r^3} & -\cos(\theta) \end{pmatrix}.
\]
If $\theta = 0$ one eigenvalue is $4e^{-1/r^2}r^{-6}$, with eigenvector in the $r$ direction (i.e. the $x$ direction), while the other eigenvalue is negative. On the other hand, for $\theta = \pi$ there is an eigenvalue in the $\theta$ direction with eigenvalue $e^{-1/r^2}$ in polar coordinates (which becomes $e^{-1/r^2} r^{-1}$ in terms of the $x$ coordinates), and the other eigenvalue is positive. These eigenvalues are continuous as they approach $r=0$, but the eigenvectors do not vary continuously.
\end{example}

\begin{remark}
In Assumption \ref{a:eigvec-continuity} we require that each individual $f_n$ has continuous eigenvectors. Per Section \ref{sec:DGD-special-case},
the function $h$, in the context of the general setting, corresponds to the function $(x_n)_{n=1}^N\mapsto \tilde f_n(x_n)$ in the context of DGF. Because each $f_n$ in the sum depends only on $x_n$, the Hessian of $\tilde f$ is block diagonal, and continuity of the eigenvectors of the individual $f_n$'s implies continuity of the eigenvectors of $\tilde f$.
\end{remark}

The following theorem demonstrates the existence of a stable manifold near regular saddle points.
\begin{theorem} \label{thrm:main-continuous}
Suppose that  $x^*$ is a regular saddle point of $h\vert_{\C}$ and Assumptions \ref{a:Q-PSD1} and 
\ref{a:gamma_t}--\ref{a:eigvec-continuity-h} hold.
Assume the weight function $t\mapsto \gamma_t$ is $C^1$. Let $q$ denote the number of negative eigenvalues of $\Hess h\vert_{\C}(x^*)$. Then there exists a $C^1$ manifold $\calS\subset [0,\infty)\times\R^{M}$ with dimension $M-q+1$ such that the following holds: For all $t_0$ sufficiently large, a solution $\vx$ to \eqref{eq:ODE1} converges to $x^*$ if and only if $\vx$ is initialized on $\calS$, i.e., $\vx(t_0) = x_0$, with $(t_0,x_0)\in\calS$.
\end{theorem}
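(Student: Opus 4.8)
The plan is to prove the theorem by a nonautonomous adaptation of the Lyapunov--Perron method. First I would translate the saddle point to the origin by setting $\xi = x - x^*$ and, using the decomposition $x = (x_c, x_{nc})$ with $d = \dim\C$ and the block form \eqref{eq:Q-diag-form} of $Q$, rewrite \eqref{eq:ODE1} as $\dot\xi = A(t)\xi + N(\xi,t)$, where the linear part is $A(t) = -(\Hess h(x^*) + \gamma_t Q)$ and $N$ collects the $C^1$ nonlinear remainder (finite under Assumption \ref{a:h-C2}) together with the constant forcing $-\nabla h(x^*)$. Because $x^*$ is a critical point of $h|_{\C}$, this forcing has zero component in the constraint directions and lives entirely in the strongly damped non-constraint block; I would absorb it into a moving quasi-equilibrium $\bar\xi(t)$ with $\bar\xi(t) = O(1/\gamma_t)\to 0$, whose time derivative is of lower order and hence negligible for the admissible ($C^1$, $\gamma_t\to\infty$) weight schedules. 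This reduces the problem to a genuine perturbation of a linear flow near the origin.

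The heart of the argument is to establish an exponential dichotomy for $\dot\xi = A(t)\xi$ with a $q$-dimensional unstable bundle and an $(M-q)$-dimensional stable bundle. Here I would exploit the two-scale structure induced by $\gamma_t\to\infty$ (Assumption \ref{a:gamma_t}): in the non-constraint block $A(t)$ is dominated by $-\gamma_t\widehat Q$ with $\widehat Q$ positive definite (Assumption \ref{a:Q-PSD1}), giving eigenvalues tending to $-\infty$ and hence a strong contraction; in the constraint block, after slaving the fast non-constraint coordinates by a Schur-complement reduction, the effective generator converges to $-\Hess h|_{\C}(x^*)$, which is nonsingular by regularity of the saddle and has exactly $q$ negative eigenvalues. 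Thus the reduced slow system is asymptotically autonomous and hyperbolic, contributing $q$ unstable and $d-q$ stable directions, while the fast block contributes $M-d$ stable directions, for $M-q$ stable directions in all. Assumption \ref{a:eigvec-continuity-h} is precisely what guarantees that the diagonalizing change of coordinates, and hence the stable/unstable splitting, varies continuously so that the dichotomy constants can be taken uniform for large $t$.

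Given the dichotomy, I would run the Lyapunov--Perron fixed-point argument: for each initialization time $t_0$ (large enough that the dichotomy estimates are in force) and each prescribed stable component $x_0^s$, the variation-of-constants integral operator on the space of trajectories decaying to the origin is a contraction, and its unique fixed point is the solution converging to $x^*$. This expresses the unstable component as a graph $x_0^u = \psi(t_0, x_0^s)$, so that $\calS_{t_0} := \{(x_0^s,\psi(t_0,x_0^s))\}$ is an $(M-q)$-dimensional set; letting $t_0$ range over $[0,\infty)$ yields $\calS = \{(t_0,x_0): x_0\in\calS_{t_0}\}$ of dimension $M-q+1$ inside $[0,\infty)\times\R^M$. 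Finally I would upgrade to $C^1$ regularity by differentiating the fixed-point equation (or via a fiber-contraction argument), using that $h\in C^2$ makes $N$ of class $C^1$ and that $t\mapsto\gamma_t$ is $C^1$, so that $A(t)$ and the resulting flow depend smoothly on $(t_0,x_0^s)$.

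I expect the principal obstacle to be the second step: classical stable-manifold and exponential-dichotomy theory presumes an autonomous, or at least uniformly bounded, linear part, whereas here $\|A(t)\|\to\infty$. Making the two-scale splitting rigorous---decoupling the fast strongly contracting non-constraint directions from the slow constraint directions uniformly in $t$, controlling the Schur-complement correction as $\gamma_t\to\infty$, and verifying that Assumption \ref{a:eigvec-continuity-h} suffices to keep the invariant splitting continuous---is the delicate technical core on which the remainder of the proof rests.
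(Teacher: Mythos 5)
Your proposal follows essentially the same route as the paper's proof: recentering at a moving critical point of the penalized objective, a two-scale eigenvalue splitting yielding $M-q$ stable directions (with Assumption \ref{a:eigvec-continuity-h} securing a continuously varying diagonalization), a Lyapunov--Perron contraction for the integral equation, and differentiation of the fixed-point equation to get $C^1$ regularity. The one caution is that the paper does not treat the drift of your quasi-equilibrium as ``negligible'': Lemma \ref{lemma:g-existence} constructs $g(\gamma)$ by the implicit function theorem, proves it is $C^1$ with finite arc length \eqref{eq:g-bdd-arc-length}, and then carries the forcing term $-g'(\gamma_t)\dot\gamma_t$ explicitly through the integral equation \eqref{eq:integral-eq0}, using only its integrability against the dichotomy kernels rather than pointwise smallness.
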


\begin{remark}[Regarding Initialization and Theorem \ref{thrm:non-convergence-saddles}] 
In the above theorem, the stable manifold is constructed from the set of time-state pairs $(t_0,x_0)$ that yield convergence to the saddle point. Thus, it is a subset of $\R\times \R^M$. Constructing the stable manifold this way is particularly useful when studying discrete-time algorithms \cite{SMKP2020saddles}. In particular, $\calS$ as constructed above is a Lyapunov unstable set, which will allow us to show that discrete-time stochastic processes are repelled from it.
However, from a practical perspective, one generally has a fixed initial time $t_0$ and then chooses a corresponding initial state $x_0$. 
Given a fixed initial time $t_0$, the time-slice of the stable manifold, given by 
$$\calS_{t_0} :=\{x_0\in \R^M: (t_0,x_0)\in \calS\}$$ represents the set of initial states under which \eqref{eq:ODE1} converges to $x^*$ starting at time $t_0$. $\calS_{t_0}$ is a smooth $(M-q)$-dimensional manifold living in the state space $\R^M$. Thus, if $q>1$, then $\calS_{t_0}$ has Lebesgue measure zero in $\R^M$. 
\end{remark}

\subsection{Proof of Theorem \ref{thrm:main-continuous}} \label{sec:stable-manifold-C1}
We will break the proof of Theorem \ref{thrm:main-continuous} into two main parts. Lemma \ref{lemma:stable-man-exist} demonstrates existence of the stable manifold, but does not show smoothness. Lemma \ref{lemma:manifold-smoothness} shows that the manifold is smooth. Lemmas \ref{lemma:stable-man-exist}--\ref{lemma:manifold-smoothness} together establish Theorem \ref{thrm:main-continuous}.

We begin with the following preliminary lemma.
\begin{lemma} \label{lemma:g-existence}
Suppose Assumptions \ref{a:Q-PSD1} and \ref{a:h-C2}--\ref{a:eigvec-continuity-h} hold and suppose that $0$ is a regular saddle point of $h\vert_\calC(0)$.
There exists a function $g:[0,\infty)\to \R^M$ such that (i)
$\nabla h(g(\gamma)) + g(\gamma)^\T Q = 0$ for all $\gamma$ sufficiently large and (ii) $g(\gamma)\to 0$ as $\gamma\to\infty$. Moreover, the arc length of $\{g(\gamma): \gamma\geq \gamma_0\}$ is finite, where $\gamma_0$ is a sufficiently large constant, i.e.,
\begin{equation} \label{eq:g-bdd-arc-length}
\int_{\gamma_0}^\infty |g'(s)|\ds < \infty.
\end{equation}
\end{lemma}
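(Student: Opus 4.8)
The plan is to identify $g(\gamma)$ with the (moving) equilibrium of the penalized gradient system \eqref{eq:ODE1} at penalty level $\gamma$; that is, $g(\gamma)$ should solve the equilibrium equation $\nabla h(x) + \gamma Q x = 0$, which is precisely property (i). These equilibria are then tracked as $\gamma \to \infty$ using the implicit function theorem (IFT). A direct application of the IFT to $F(x,\gamma) := \nabla h(x) + \gamma Q x$ fails: there is no finite $\gamma$ at which the saddle $x^* = 0$ is an exact equilibrium, and the $x$-Jacobian $\Hess h + \gamma Q$ degenerates along the nullspace $\calC$ of $Q$. The key idea is therefore to reparametrize by $s = 1/\gamma$ and to separate the constraint and non-constraint directions.

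Working in the rotated coordinates of \eqref{eq:C-rotation}--\eqref{eq:Q-diag-form}, write $x = (x_c, x_{nc})$ with $Q = \diag(0, \widehat Q)$ and $\widehat Q$ positive definite. Multiplying the non-constraint block of the equilibrium equation by $s$, I would set
\begin{equation}
G(x_c, x_{nc}, s) := \begin{pmatrix} \nabla_{x_c} h(x_c, x_{nc}) \\ s\,\nabla_{x_{nc}} h(x_c, x_{nc}) + \widehat Q\, x_{nc} \end{pmatrix}
\end{equation}
and seek zeros of $G$. At the origin $(0,0,0)$ one has $G = 0$, since $0$ being a critical point of $h\vert_\calC$ gives $\nabla_{x_c} h(0,0) = 0$. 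The $(x_c, x_{nc})$-Jacobian of $G$ there is block upper triangular with diagonal blocks $\nabla^2_{x_c x_c} h(0,0) = \Hess h\vert_\calC(0)$ and $\widehat Q$: the regular-saddle hypothesis makes the former nonsingular and Assumption \ref{a:Q-PSD1} makes the latter positive definite, so the Jacobian is invertible.

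Since $h \in C^2$ (Assumption \ref{a:h-C2}), $G$ is $C^1$, and the IFT yields a $C^1$ map $s \mapsto \phi(s) = (x_c(s), x_{nc}(s))$ on some interval $[0, s_0]$ with $\phi(0) = 0$ and $G(\phi(s), s) = 0$. Setting $g(\gamma) := \phi(1/\gamma)$ for $\gamma \geq \gamma_0 := 1/s_0$ then gives an equilibrium of \eqref{eq:ODE1} for every large $\gamma$ (property (i)), while continuity of $\phi$ with $\phi(0) = 0$ yields $g(\gamma) \to 0$ (property (ii)). For the finite-arc-length claim \eqref{eq:g-bdd-arc-length}, I would use that arc length is invariant under reparametrization: the curve traced by $g$ coincides with $\{\phi(s): 0 < s \leq s_0\}$, and since $\phi$ is $C^1$ on the compact interval $[0, s_0]$ its derivative is bounded there, whence
\begin{equation}
\int_{\gamma_0}^\infty |g'(\gamma)|\,d\gamma = \int_0^{s_0} |\phi'(s)|\,ds \leq s_0 \sup_{s\in[0,s_0]} |\phi'(s)| < \infty.
\end{equation}

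The main obstacle is the degeneracy at $\gamma = \infty$: one cannot apply the IFT naively because $Q$ is singular on $\calC$. The resolution---substituting $s = 1/\gamma$ and decomposing into the constraint block (controlled by the regular-saddle Hessian) and the non-constraint block (controlled by the positive-definite $\widehat Q$)---is the crux, and everything else is a routine consequence of the IFT. I note that Assumption \ref{a:eigvec-continuity-h} does not appear to be needed for this preliminary lemma; it should enter the subsequent construction and smoothness analysis of the full stable manifold.
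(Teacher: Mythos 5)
Your proof is correct and follows essentially the same route as the paper: reparametrize by $s=1/\gamma$, rescale the off-constraint block of the equilibrium equation by $s$, and invoke the implicit function theorem using invertibility of $\Hess h\vert_\calC(0)$ (regular saddle) and of $\widehat Q$, with the arc-length bound following from the change of variables to the compact $s$-interval. The only cosmetic difference is that you apply the IFT once to the coupled block-triangular system, whereas the paper applies it twice in sequence (first solving for $x_c$ in terms of $x_{nc}$, then for $x_{nc}$ in terms of $\tau$); both are valid, and your observation that Assumption \ref{a:eigvec-continuity-h} is not needed here is also accurate.
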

In words, the idea of the lemma is the following: We are interested in 0 as a critical point of $h\vert_\calC$. Of course, 0 may not be a critical point of the  \emph{penalized} function $h(x)+\gamma \frac{1}{2} x^\T Qx$, for a fixed $\gamma\geq 0$. But for sufficiently large penalty (i.e., for $\gamma$ sufficiently large), there is a critical point of the penalized function near 0. The location of this critical point is given by $g(\gamma)$. As we take $\gamma\to\infty$, the critical point of the penalized function converges to 0. 
In analyzing the dynamics \eqref{eq:ODE1}  it will typically be convenient to recenter about the point $g(\gamma_t)$ at any given time $t$.  
The proof of the lemma is given below.
\begin{proof}
The lemma will follow by repeated application of the implicit function theorem. Without loss of generality, assume that the constraint set is given by $\calC = \myspan\{e_1,\ldots,e_d\}$, i.e., the span of the first $d$ canonical vectors.
Let $x\in\R^M$ be decomposed as $x=(x_c,x_{nc})$, where $x_c\in\R^d$ refers to the `constraint' component and $x_{nc}\in\R^{M-d}$ refers to the `not constraint' component of $x$. Let $G_c:\R^M\to\R^d$ be given by
\begin{align}
G_c(x_c,x_{nc}) & :=  D_{x_c} \left( h(x_c,x_{nc}) + x^\T  Q x \right)\\
& = D_{x_c} h(x_c,x_{nc}),
\end{align}
where the second line follows from the fact that, by construction, $Q$ is null in directions along the constraint set. Observe that $G_c$ is $C^1$ and $G_c(0,0) = 0$. Recalling that $\Hess h\vert_\calC(0)$ is invertible (i.e., $D_{x_c}^2 h(x_c,x_{nc})\vert_{(x_c,x_{nc})=(0,0)}$ is invertible), the implicit function theorem implies that there exists a unique, $C^1$ function $x^{c}:\R^{M-d}\to\R^d$ such that $$G_c(x^c(x_{nc}),x_{nc})=0$$
for $x_{nc}$ in a neighborhood of zero.

Given that $\calC = \myspan\{e_1,\ldots,e_d\}$, the matrix $Q$ takes the form  $Q = \begin{pmatrix}0 & 0 \\ 0 & Q_{nc} \end{pmatrix}$, where $0\in\R^{d\times d}$ is the zero matrix, and $Q_{nc}\in\R^{(M-d)\times (M-d)}$ is positive definite.

For $\tau\geq 0$, let $G_{nc}:\R^M\to\R^{M-m}$ be given by
$$G_{nc}(\tau,x_{nc}) := \tau D_{x_{nc}} h(x^c(x_{nc}),x_{nc}) + x_{nc}^\T Q_{nc},$$
where, in an abuse of notation, by $D_{x_{nc}} h(x^c(x_{nc}),x_{nc})$ we mean
$D_{x_{nc}}h$ evaluated at $(x^c(x_{nc}),x_{nc})$.
Note that $G_{nc}$ is $C^1$, $G_{nc}(0,0) = 0$, and $D_{x_{nc}} G_{nc}(\tau,x_{nc})\vert_{(\tau,x_{nc})=(0,0)} = Q_{nc}$, which is invertible. By the implicit function theorem there exists a function $x^{nc}(\tau)$ such that $G_{nc}(\tau,x^{nc}(\tau))=0$ for $\tau$ near zero.

For $\gamma> 0$ sufficiently large let $g(\gamma) := (x^c(x^{nc}(1/\gamma)),x^{nc}(1/\gamma))$. By construction, for all $\gamma$ sufficiently large, $\frac{1}{\gamma}\nabla h(x) + x^\T Q = 0$, or equivalently, $\nabla h(x) + \gamma x^\T Q = 0$ for $x=g(\gamma)$.\footnote{As $h$ is scalar valued, the notation $\nabla$ and $D$ are both used refer to the gradient and are used interchangeably.}

The integrability claim \eqref{eq:g-bdd-arc-length} follows by noting that $\tau\mapsto \hat x(\tau):= (x^{c}(x^{nc}(\tau)),x^{nc}(\tau))$ is $C^1$ (by our use of the implicit function theorem), and after a change of variables the integral \eqref{eq:g-bdd-arc-length} is equivalent to $\int_0^{\tau_1} |D_\tau \hat x(\tau)|\dtau$ for some finite $\tau_1$. Since $\hat x$ is $C^1$, the integral is finite.
\end{proof}

The next lemma establishes the existence of a stable manifold. The proof technique relies on an adaptation of the classic Perron-Lyapunov method (see e.g. Chapter 4 in \cite{chicone2006ordinary}) tailored to the particular nonautonomous dynamical system \eqref{eq:ODE1}.
\begin{lemma} \label{lemma:stable-man-exist}
Suppose Assumptions \ref{a:Q-PSD1} and \ref{a:gamma_t}--\ref{a:eigvec-continuity-h} hold and let $h$, $\gamma_t$, $p$ and $x^*$ be as in Theorem \ref{thrm:main-continuous}. Then there exists a manifold $\calS\subset \R\times  \R^{M}$ with dimension $M-p+1$ such that the following holds: For all $t_0$ sufficiently large, a solution $\vx$ to \eqref{eq:ODE1} converges to $x^*$ if and only if $\vx$ is initialized on $\calS$, i.e., $\vx(t_0) = x_0$ with $(t_0,x_0)\in\calS$.
\end{lemma}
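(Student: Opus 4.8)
The plan is to adapt the classical Lyapunov--Perron construction (as in Chapter~4 of \cite{chicone2006ordinary}) to a \emph{recentered} and \emph{nonautonomous} form of \eqref{eq:ODE1}. Assume without loss of generality that $x^*=0$. Since the analysis is local and Assumption \ref{a:h-C2} holds, in a neighborhood of $0$ we may replace $\partial h$ by $\nabla h$ and study $\dot\vx = -\nabla h(\vx) - \gamma_t Q\vx$. The first step is to recenter about the moving quasi-equilibrium $g(\gamma_t)$ furnished by Lemma \ref{lemma:g-existence}. Writing $\vy = \vx - g(\gamma_t)$ and using the defining relation $\nabla h(g(\gamma_t)) + \gamma_t Q\,g(\gamma_t) = 0$ to cancel the zeroth-order terms, a Taylor expansion of $\nabla h$ about $g(\gamma_t)$ gives
\begin{equation}
\dot\vy = A(t)\vy - R(t,\vy) - \tfrac{d}{dt} g(\gamma_t),
\end{equation}
where $A(t) := -\big(\Hess h(g(\gamma_t)) + \gamma_t Q\big)$ is the time-varying linearization, $R(t,\vy)$ is the nonlinear remainder with $R(t,0)=0$ and $D_\vy R(t,0)=0$ (hence Lipschitz with constant tending to $0$ as $\vy\to0$), and the forcing $\tfrac{d}{dt}g(\gamma_t)$ is integrable on $[t_0,\infty)$: its time integral is bounded by the arc length of $\{g(\gamma):\gamma\ge\gamma_{t_0}\}$, which is finite by \eqref{eq:g-bdd-arc-length}.

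The second, and most delicate, step is to establish an exponential dichotomy for the linear system $\dot\vy = A(t)\vy$. Because $\gamma_t\to\infty$, the term $-\gamma_t Q$ dominates on the non-constraint subspace $\calC^\perp$ (where $Q$ is positive definite), producing a strongly stable ``fast'' block whose decay rate grows without bound, whereas on $\calC$ the penalty is inert and $A(t)$ reduces asymptotically to $-\Hess h\vert_\calC(g(\gamma_t))\to -\Hess h\vert_\calC(0)$. Since $0$ is a regular saddle of $h\vert_\calC$, this limiting matrix is hyperbolic with exactly $q$ unstable directions (corresponding to the $q$ negative eigenvalues of $\Hess h\vert_\calC(0)$, denoted $p$ in the statement) and $d-q$ stable directions. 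The $\calC$--$\calC^\perp$ coupling in $\Hess h$ is rendered negligible by the growth of $\gamma_t$, so a $\gamma_t$-dependent graph/Riccati change of variables block-diagonalizes the fast and slow parts asymptotically. This is precisely where Assumption \ref{a:eigvec-continuity-h} is needed: it supplies a continuous orthonormal frame diagonalizing $\Hess h(g(\gamma_t))$, so the stable and unstable eigendirections can be tracked consistently as $t$ varies (Example \ref{example-eigval-problem} shows that without it the eigendirections may rotate even while the eigenvalues converge, destroying the dichotomy). The outcome is a time-dependent splitting $\R^M = E_s(t)\oplus E_u(t)$ with $\dim E_u(t)=q$ and fundamental solution $\Phi(t,s)$ satisfying $\|\Phi(t,s)P_s(s)\|\le C e^{-\alpha(t-s)}$ for $t\ge s$ and $\|\Phi(t,s)P_u(s)\|\le C e^{-\beta(s-t)}$ for $t\le s$, for some $\alpha,\beta>0$.

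Third, I would set up the Lyapunov--Perron fixed-point equation. A solution $\vy$ of the recentered system decays to $0$ as $t\to\infty$ if and only if, for a prescribed stable component $\xi = P_s(t_0)\vy(t_0)$, it solves
\begin{equation}
\vy(t) = \Phi(t,t_0)\xi + \int_{t_0}^t \Phi(t,s)P_s(s)\,G(s,\vy(s))\,ds - \int_t^\infty \Phi(t,s)P_u(s)\,G(s,\vy(s))\,ds,
\end{equation}
where $G(s,\vy) := -R(s,\vy) - \tfrac{d}{dt}g(\gamma_s)$. Using the dichotomy estimates, the smallness of the Lipschitz constant of $R$ on a small ball, and the integrability of the forcing, this map is a contraction on a suitable space of exponentially decaying functions; the Banach fixed-point theorem then yields, for each large $t_0$ and each small $\xi\in E_s(t_0)$, a unique decaying solution $\vy(\,\cdot\,;t_0,\xi)$. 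Undoing the recentering, $\calS$ is the graph of $(t_0,\xi)\mapsto\big(t_0,\,g(\gamma_{t_0}) + \xi + P_u(t_0)\vy(t_0;t_0,\xi)\big)$; that is, the unstable component is expressed as a function of the $(M-q)$-dimensional stable data $\xi$, so together with the free time parameter we obtain a manifold $\calS\subset\R\times\R^M$ of dimension $(M-q)+1$. The equivalence ``converges to $x^*$ iff initialized on $\calS$'' follows since the fixed point captures exactly the decaying solutions, while any solution with nonzero unstable component must grow.

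I expect the exponential dichotomy of the second step to be the main obstacle: the linear part is genuinely nonautonomous with a coefficient $\gamma_t$ that blows up, so the constant-coefficient spectral estimates of the classical stable-manifold theorem do not apply directly, and one must simultaneously control the growing fast decay, the slow hyperbolic block, and the $\calC$--$\calC^\perp$ coupling, with Assumption \ref{a:eigvec-continuity-h} supplying the continuity that makes the spectral splitting well defined. (Smoothness of $\calS$, also claimed in Theorem \ref{thrm:main-continuous}, is deferred to Lemma \ref{lemma:manifold-smoothness}; here only existence and the dimension count are at stake.)
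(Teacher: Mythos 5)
Your proposal is correct and follows essentially the same route as the paper: recentering about the moving quasi-equilibrium $g(\gamma_t)$ from Lemma \ref{lemma:g-existence}, splitting the linearization into a fast block (eigenvalues $\to-\infty$ from $-\gamma_t Q$ on $\calC^\perp$) and a slow hyperbolic block (eigenvalues converging to those of $-\Hess h\vert_\calC(x^*)$), and then running a Lyapunov--Perron fixed-point argument to express the unstable coordinates as a graph over the stable ones. The only cosmetic difference is that where you invoke a Riccati-type asymptotic block-diagonalization, the paper exploits the symmetry of $A(t)$ to diagonalize it exactly by the orthonormal frame $U(t)$ supplied by Assumption \ref{a:eigvec-continuity-h}, which streamlines the dichotomy estimates you correctly identify as the delicate step.
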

\begin{proof}
\noindent 1. (\textit{Recenter}) Without loss of generality we will  assume that $x^* = 0$. By
Lemma \ref{lemma:g-existence} there exists a function $g\in C^1([0,\infty); \R^M)$ such that, for each $\gamma \geq 0$ sufficiently large, $g(\gamma)$ is a critical point of the penalized function $h(x)+\gamma x^\T Qx$ and $g(\gamma)\to 0$ as $\gamma\to\infty$.

Letting $\vy(t) = \vx(t) - g(\gamma_t)$ we see that $\vx$ is a solution to \eqref{eq:ODE1} if and only if $\vy$ is a solution to
\begin{equation}\label{eq:ODE2}
\dot \vy = -\nabla_x h(\vy + g(\gamma_t)) - \gamma_t Q(\vy + g(\gamma_t)) - g'(\gamma_t)\dot \gamma_t,
\end{equation}
where we use the notation $g'(\gamma)$ to denote $D g(\gamma)$.
For $t\geq 0$ let
\begin{equation} \label{eq:A-matrix-def}
A(t) \coloneqq -\Hess_x \left( h(x) + \gamma_t x^\T  Q x \right)\big\vert_{x = g(\gamma_t)}
\end{equation}
and let
\begin{equation} \label{eq:F-def}
F(y,t) \coloneqq -\nabla_x h(y + g(\gamma_t)) - \gamma_t Q(y + g(\gamma_t)) - A(t)y
\end{equation}
so that we may express \eqref{eq:ODE2} as
\begin{equation}\label{eq:ode-y}
\dot \vy(t) = A(t)\vy(t) + F(\vy(t),t) - g'(\gamma_t)\dot \gamma_t.
\end{equation}

2. (\textit{Diagonalize}) For each $t\geq 0$, let $U(t)$ be a unitary matrix that diagonalizes $A(t)$ (which is possible as $A(t)$ is always symmetric), so that
\begin{equation} \label{def:Lambda-t}
\Lambda(t) \coloneqq U(t)A(t)U(t)^\T ,
\end{equation}
where $\Lambda(t)$ is diagonal. Since $\gamma_t\in C^1$, 
by Assumption \ref{a:eigvec-continuity-h} we may construct $U(t)$ as a differentiable function with $U(t)$ that converges to some fixed matrix
as $t\to\infty$ (or, equivalently, as $g(\gamma_t)\to 0$).  Changing coordinates again, let $\vz(t) = U(t)\vy(t)$ so that $\vy$ is a solution to \eqref{eq:ode-y} if and only if $\vz$ is a solution to
\begin{align}
\dot \vz(t)  = & U(t)\dot \vy(t) + \dot U(t) \vy(t)\\
= & U(t)\Big( A(t)U(t)^\T \vz(t) + F(U(t)^\T \vz(t),t)\\
& \hspace{5em} - g'(\gamma_t)\dot \gamma_t \Big) + \dot{U}(t) U(t)^\T  \vz(t)
\end{align}
Letting
\begin{equation} \label{eq:def-F-tilde}
\tilde F(z,t) \coloneqq U(t)F(U(t)^\T  z,t) + \dot U(t) U(t)z,
\end{equation}
the above is equivalent to
\begin{equation}\label{eq:ODE-z}
\dot \vz(t) = \Lambda(t)\vz(t) + \tilde F(\vz(t),t) - U(t)g'(\gamma_t)\dot \gamma_t.
\end{equation}
Note that $F(0,t) = 0$ and $F(y,t) = o(|y|^2)$ for $t\geq 0$. Consequently, for any $\epsilon>0$ there exists an $r>0$ and a $T\geq 0$ such that for all $t\geq T$ we have
\begin{equation} \label{eq:Lip-F}
|\tilde F(z,t) - \tilde F(\tilde z,t)| \leq \e |z-\tilde z|, \quad\quad \forall ~z,\tilde z\in B_r(0).
\end{equation}

\bigskip
\noindent 3. (\textit{Compute Stable Solutions}) Let $\lambda_1(t),\ldots,\lambda_M(t)$ denote the eigenvalues of $\Lambda(t)$. Without loss of generality, we may assume that the eigenvalues are ordered so each $\lambda_i(t)$ varies smoothly in $t$ (see Theorem II.5.1 in \cite{katoBook}.) Let 
\begin{equation}\label{def:B}
B := -\Hess h\vert_\calC(0).
\end{equation}
and let $\lambda_1,\ldots,\lambda_d$ denote the eigenvalues of $B$.
By Lemma \ref{lemma:eigvalue-convergence} in the appendix, for each eigenvalue $\lambda_i$ of $B$, there exists an eigenvalue $\lambda_i(t)$ of $\Lambda(t)$ such that $\lambda_i(t)\to\lambda_i$. Moreover, for each remaining eigenvalue of $\Lambda(t)$ there holds $\lambda_i(t)\to-\infty$.
Given the limits established for each $\lambda_i(t)$, there exists a time $T$ sufficiently large such that for each $i$ the sign of $\lambda_i(t)$ remains constant for $t\geq T$.

Without loss of generality assume that the coordinates are ordered so that the first $\ns <M$ diagonal entries of $\Lambda(t)$ are negative
and the remaining $M-\ns$ diagonal entries are positive for all $t$ sufficiently large. (The notation $n_s$ is indicative of number of ``stable'' eigenvalues.) Let $\Lambda(t)$ be decomposed as
\begin{equation} \label{eq:Lambda-decomposition}
\Lambda(t) =
\begin{pmatrix}
  \Lambda^{s}(t) & 0 \\
  0 & \Lambda^u(t)
\end{pmatrix}
\end{equation}
where $\Lambda^{s}(t)\in \R^{\ns\times \ns}$ and $\Lambda^u(t) \in \R^{(M-\ns)\times (M-\ns)}$ denote the `stable' and `unstable' diagonal submatrices respectively.
Let
\begin{align} \label{eq:V-def}
V^s(t_2,t_1) & \coloneqq
\begin{pmatrix}
e^{\int_{t_1}^{t_2} \Lambda^s(\tau)\,d\tau} & 0\\
0 & 0\\
\end{pmatrix},\\
~\\
V^u(t_2,t_1) & \coloneqq
\begin{pmatrix}
0 & 0\\
0 & e^{\int_{t_1}^{t_2} \Lambda^u(\tau)\,d\tau}\\
\end{pmatrix}.
\end{align}
By construction we have $\limsup_{t\to\infty} \lambda_j(t) < 0$,  $j=1,\ldots,n_s$. Hence, we may choose an $\nu > 0$ such that $\lambda_j(t) < -\nu < 0$ for $j=1,\ldots, n_s$ and all $t$ sufficiently large.
We may also choose constants $\sigma> 0$ and $K>0$ such that the following estimates hold
\begin{align}
\label{eq:ex-estimate-s}
\|V^s(t_2,t_1)\| & \leq Ke^{-(\nu+\sigma)(t_2-t_1)}, \quad \quad t_2\geq t_1\\
\label{eq:ex-estimate-u}
\|V^u(t_2,t_1)\| & \leq Ke^{\sigma (t_2-t_1)}, \quad\quad\quad~~~ t_2\leq t_1.
\end{align}
Let $t_0\in \R$, $a^s\in \R^{n_s}$, and $t\geq t_0$, and consider the integral equation
{\small
\begin{multline} \label{eq:integral-eq0}
\vu(t,(t_0,a^s)) = V^s(t,t_0)
\begin{pmatrix}
a^s\\
0
\end{pmatrix}\\
+ \int_{t_0}^t V^s(t,\tau)\bigg(\tilde F(\vu(\tau,(t_0,a^s)),\tau) -U(\tau)g'(\gamma_\tau)\dot\gamma_\tau \bigg)\,d\tau\\
 - \int_{t}^\infty V^u(t,\tau)\bigg( \tilde F(\vu(\tau,(t_0,a^s)),\tau) -U(\tau)g'(\gamma_\tau)\dot\gamma_\tau  \bigg)\,d\tau,
\end{multline}
}where $\vu:\R\times \R\times \R^{n_s}\to\R^M$. To be precise, we have included the initial time $t_0$ as a parameter in $\vu$. However, in most of our analysis $t_0$ will be fixed. Thus, in an abuse of notation, we will generally suppress the $t_0$ argument and only specify $\vu$ in terms of the arguments $t$ and $a^s$, i.e., $\vu(t,a^s)$. 

Suppose $\e < \frac{\sigma}{4K}$ and let $r$ and $T$ be chosen so that \eqref{eq:Lip-F} holds for all $t\geq t_0\geq T$.
Using a standard contraction mapping argument for  successive approximations (see, e.g., \cite{Perko_ODE}), it is straightforward to verify that \eqref{eq:integral-eq0} has a unique (continuous in $t$) solution for all $a^s$ sufficiently small and $t_0$ sufficiently large, and that the solution satisfies
\begin{equation} \label{eq:u-bound}
|\vu(t,a^s)| \leq 2K(1+|a^s|)e^{-\nu(t-t_0)}.
\end{equation}

If $t\mapsto \vu(t,a^s)$ is continuous and solves \eqref{eq:integral-eq0} then, $\vu(t,a^s)$ is differentiable in $t$ and solves \eqref{eq:ODE-z} with componentwise initialization $\vu_i(t_0,a^s) = a^s_i$ for $i=1,\ldots,n_s$. This follows by differentiating the right hand side of \eqref{eq:integral-eq0} in $t$.

\bigskip
\noindent 4. (\textit{Construct Stable Manifold})
We now construct the stable set $\calS$ corresponding to the ODE \eqref{eq:ODE-z}.
For each $z_0^s\in B_{\frac{r}{3}}(0)\subset \R^{n_s}$ let $\vu(\cdot,z_0^s)$ be the (unique) solution to \eqref{eq:integral-eq0}. For each $t\in[T,\infty)$ define the component map $\psi_j:\R\times \R^{n_s}\to \R$ by
\begin{equation}\label{eq:psi-S-def}
\psi_j(t_0,z_0^s) \coloneqq \vu_j(t_0,(t_0,z_0^s)), \quad j=n_s+1,\ldots,M,
\end{equation}
and let $\psi = (\psi_j)_{j=n_s+1}^M$.
In words, $\psi:\R^{n_s}\to \R^{M-n_s}$ takes as input an initial time $t_0$ and ``stable'' coordinates $z_0^s\in \R^{n_s}$ and returns the corresponding ``unstable'' coordinates so that the point $(z^s_0,\psi(t_0,z^s_0))\in \R^M$ is a stable initialization of the ODE at time $t_0$, that is, $z_0^u=\psi(t_0,z^s_0)$ is the unique point in $\R^{M-n_s}$ such that if $\vx(t_0) = (z^s_0, z_0^u)$ then $\vx(t)\to 0$ as $t\to\infty$.

The stable manifold (with respect to \eqref{eq:ODE-z}) is given by
$$
\calS \coloneqq\{(t_0,z_0^s,\psi(t_0,z_0^s)), t_0\geq T, \, z_0^s\in \R^k\cap B_{\frac{r}{3}}(0)\}.
$$

By Lemma \ref{lemma:stable-in-S} we see that $\calS$ contains all stable initializations $(t_0,z_0)$. That is, if $\vz$ is a solution to \eqref{eq:ODE-z} with $\vz(t_0) = z_0$ and $\vz(t)\to 0$, then $(t_0,z_0)\in \calS$.

Having constructed $\calS$ (the stable manifold for \eqref{eq:ODE-z}) the stable manifold  for \eqref{eq:ODE1}, denoted here by $\tilde \calS$, is obtained by an appropriate change of coordinates, $\tilde \calS \coloneqq\{(t,x)\in \R\times\R^M:~ U(t)(x-g(\gamma_t)) \in \calS \}.$
\end{proof}

Finally, the fact that $\calS$ is a $C^1$ manifold will be shown in the following lemma.
\begin{lemma} \label{lemma:manifold-smoothness}
Assume the hypotheses of Theorem \ref{thrm:main-continuous} hold. Then the stable manifold $\calS$ is of class $C^1$. That is, the maps $\psi_j$, $j=n_s+1,\ldots,M$, defined in \eqref{eq:psi-S-def} are $C^1$. Moreover, $\frac{\partial \psi_j(t_0,0)}{\partial a_i^s} = 0$, $j=n_s+1,\ldots,M$, $i=1,\ldots,n_s$ and all $t_0$.
\end{lemma}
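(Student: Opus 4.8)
The plan is to obtain $C^1$ smoothness of $\psi$ by differentiating the Lyapunov--Perron fixed-point equation \eqref{eq:integral-eq0} with respect to the stable parameter $a^s$, since $\psi_j(t_0,z_0^s)=\vu_j(t_0,(t_0,z_0^s))$ by \eqref{eq:psi-S-def}. The first step is to upgrade the Lipschitz bound \eqref{eq:Lip-F} on $\tilde F$ to a bound on its derivative. Since $h$ is $C^2$ near $x^*$ (Assumption~\ref{a:h-C2}) and $A(t)$ in \eqref{eq:A-matrix-def} is precisely the Hessian of the penalized function evaluated at $g(\gamma_t)$, we have $D_yF(0,t)=0$; combined with $\dot U(t)\to 0$ (guaranteed by Assumption~\ref{a:eigvec-continuity-h} and the convergence of $U(t)$), this yields $\|D_z\tilde F(z,t)\|\le\e$ for all $z\in B_r(0)$ and $t\ge T$, where we keep $\e<\sigma/(4K)$ as in Lemma~\ref{lemma:stable-man-exist}.

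Differentiating \eqref{eq:integral-eq0} in $a^s$ (the forcing term $U(\tau)g'(\gamma_\tau)\dot\gamma_\tau$ drops out, being independent of $a^s$), the candidate derivative $W(t):=D_{a^s}\vu(t,a^s)\in\R^{M\times\ns}$ should satisfy the linear equation
\begin{multline*}
W(t)=V^s(t,t_0)\begin{pmatrix}I_{\ns}\\0\end{pmatrix}+\int_{t_0}^t V^s(t,\tau)\,D_z\tilde F(\vu(\tau),\tau)\,W(\tau)\dtau\\
-\int_t^\infty V^u(t,\tau)\,D_z\tilde F(\vu(\tau),\tau)\,W(\tau)\dtau.
\end{multline*}
On the Banach space of continuous $M\times\ns$ matrix functions equipped with the weighted norm $\|W\|:=\sup_{t\ge t_0}e^{\nu(t-t_0)}\|W(t)\|$, the right-hand side is an affine contraction: the dichotomy estimates \eqref{eq:ex-estimate-s}--\eqref{eq:ex-estimate-u} together with $\|D_z\tilde F\|\le\e$ give the two integral operators a combined Lipschitz constant at most $K\e(\tfrac1\sigma+\tfrac1{\sigma+\nu})\le 2K\e/\sigma<\tfrac12$, exactly the bound already used for \eqref{eq:integral-eq0}. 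Hence $W$ exists, is unique and bounded, and obeys a decay estimate analogous to \eqref{eq:u-bound}.

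The main obstacle is to certify that this formally-derived $W$ is genuinely the Fréchet derivative of the fixed point, rather than merely a solution of the linearized equation. I would resolve this with the uniform contraction principle (see, e.g., \cite{chicone2006ordinary}): writing the right-hand side of \eqref{eq:integral-eq0} as a map $T(\vu,(t_0,a^s))$ on the weighted space $X$ of exponentially decaying curves, one verifies that $T$ is a uniform contraction in its first argument and is jointly $C^1$ in $(\vu,a^s)$ (using the derivative bound on $\tilde F$), whence the fixed point $a^s\mapsto\vu(\cdot,a^s)$ is $C^1$ with derivative equal to the unique fixed point $W$ of the linearized map. Differentiability in the parameter $t_0$ (needed for joint $C^1$ regularity of the $\psi_j$, and hence of $\calS$) is obtained by the same scheme, differentiating the explicit $t_0$-dependence in $V^s(t,t_0)$ and in the lower limit of the first integral; the chain rule applied to $\psi_j(t_0,z_0^s)=\vu_j(t_0,(t_0,z_0^s))$ then gives $\psi_j\in C^1$.

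Finally, the tangency identity $\partial\psi_j(t_0,0)/\partial a_i^s=0$ for $j>\ns$ is read off from the variational equation at $t=t_0$. Projecting onto the unstable block and using that $V^s(t,t_0)$ is supported on the stable coordinates, the initial term contributes nothing to the unstable rows, and the first integral is empty at $t=t_0$; thus $\partial\psi_j(t_0,0)/\partial a_i^s=W_{ji}(t_0)$ reduces to $-\big(\int_{t_0}^\infty V^u(t_0,\tau)D_z\tilde F(\vu(\tau,0),\tau)W(\tau)\dtau\big)_{ji}$, evaluated along the base orbit $\vu(\cdot,0)$. The remaining task is then to show that this residual feedback term vanishes at $a^s=0$, which I expect to follow from the decay of $D_z\tilde F$ along the base orbit (since $\vu(\tau,0)\to0$, $D_yF(0,\tau)=0$, and $\dot U(\tau)\to0$); this is precisely the statement that $\calS$ is tangent to the stable subspace at the origin, exactly as in the autonomous Perron picture.
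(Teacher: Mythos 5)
Your route for the main $C^1$ claim is essentially the paper's: both differentiate the Lyapunov--Perron equation \eqref{eq:integral-eq0} in $a^s$ to obtain a linear integral equation for the candidate derivative (your $W$ is the matrix form of the paper's $\vz$ in \eqref{eq:integral-eqz}), both solve it by the same dichotomy-plus-contraction estimate with constant $2K\e/\sigma<\tfrac12$, and both must then certify that this solution is the true derivative. You do that last step via the uniform contraction principle; the paper does it by the direct difference-quotient estimate, bounding $m(h)=\sup_{t\ge t_0}\|\vz(t,a^s)-\vq(t,a^s,h)\|$ through the remainder $\Delta$. These are interchangeable, and your extra remarks on regularity in $t_0$ are a harmless addition.

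The genuine gap is in the tangency claim. You correctly reduce $\partial\psi_j(t_0,0)/\partial a_i^s$ to the unstable rows of $-\int_{t_0}^\infty V^u(t_0,\tau)\,D_z\tilde F(\vu(\tau,0),\tau)\,W(\tau)\dtau$, but the mechanism you invoke --- decay of $D_z\tilde F$ along the base orbit because $\vu(\tau,0)\to 0$, $D_yF(0,\tau)=0$, and $\dot U(\tau)\to 0$ --- only makes the integrand \emph{small}, not zero, and an integral of a small nonzero integrand need not vanish. Unlike the autonomous Perron setting, here the base orbit $\vu(\cdot,0)$ is not the equilibrium (it is pushed off the origin by the forcing $U(\tau)g'(\gamma_\tau)\dot\gamma_\tau$), so $D_z\tilde F(\vu(\tau,0),\tau)$ is not identically zero and your argument does not close; you acknowledge as much by writing that you ``expect'' the residual term to vanish. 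The paper's stated route is different: it extracts from the successive-approximation scheme the pointwise bound \eqref{eq:z-to-zero}, in which the derivative estimate carries a factor of $|a^s|$, and reads the tangency off by setting $a^s=0$. To complete your proof you would need an estimate of that type (a bound on the unstable components of $W$ that vanishes with $|a^s|$, or an exact cancellation in the feedback integral at $a^s=0$); as written, this step is missing.
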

We remark that the significance of the statement that $\frac{\partial \psi_j(t_0,0)}{\partial a_i^s} = 0$ in the lemma above is that it establishes that the stable eigenspace of \eqref{eq:ODE-z} is tangential to the stable manifold at 0. This is analogous to standard properties of the classical stable manifold \cite{coddington1955theory}. However, we note that when constructing $\psi$ in this case, we have recentered about $g(\gamma_t)$ and rotated by the time-varying $U(t)$. Thus, the stable eigenspace relative to which the manifold is tangential is a  time-varying object.
We now prove the lemma. 
\begin{proof}
Let $\vu(t,a^s)$ be the solution to \eqref{eq:integral-eq0} with stable initialization $a^s$ at time $t$. We will begin by establishing the existence of derivatives of $\vu$ with respect to the coordinates of $a^s$. Recalling \eqref{eq:psi-S-def}, this is equivalent to studying the partial derivatives of $\psi_j$. 

Fix a coordinate $i\in\{1,\ldots,\ns\}$. We will compute the vector of partial derivatives $(\frac{\partial \vu_j(t,a_s)}{\partial a_i^s} )_{j=1}^M$. 
Define the integral equation 
\begin{multline} \label{eq:integral-eqz}
  \vz(t,a^s) = V^s(t,t_0)e_i\\
  + \int_{t_0}^t V^s(t,\tau) D_x\tilde F(\mathbf{u}(\tau,a^s),\tau) \vz(\tau,a) \,d\tau\\ - \int_t^\infty V^u(t,\tau) D_x\tilde F(\mathbf{u}(\tau,a^s),\tau) \vz(\tau,a) \,d\tau.
\end{multline}
It will be shown that \eqref{eq:integral-eqz} yields the desired vector of partial derivatives, i.e., $\vz(t,a_i^s) = (\frac{\partial \vu_j(t,a_s)}{\partial a_i^s} )_{j=1}^M$. Equation \eqref{eq:integral-eqz} may be understood intuitively as follows: Consider taking partial derivatives with respect to $a_i^s$ in \eqref{eq:integral-eq0}. Using the chain rule we see that this is equivalent to \eqref{eq:integral-eqz} if $\vz$ takes the desired form. Equation \eqref{eq:integral-eqz} provides a convenient contractive formula for iteratively approximating $(\frac{\partial \vu_j(t,a_s)}{\partial a_i^s} )_{j=1}^M$. 

Note that, since $\vu(\tau,a^s
)\to 0$ as $\tau\to\infty$, using \eqref{eq:Lip-F} we see that $\|D_x\tilde F(\mathbf{u}(\tau,a^s),\tau)\|$ may be taken to be arbitrarily small by taking $\tau\to\infty$. Again using standard successive approximation techniques (see \cite{coddington1955theory}), we see that there exists a unique solution (in the class of continuous functions) to \eqref{eq:integral-eqz} for all $a^s$ sufficiently small and $t_0$ sufficiently large, and moreover, the solution satisfies
\begin{equation} \label{eq:z-to-zero}
|\vz(t,a^s)| \leq 2K|a^s|e^{-\nu (t-t_0)}
\end{equation}
for $t\geq t_0$ where $\nu$ is as selected after \eqref{eq:V-def}.
We now confirm that $\vz(t,a^s)$ of \eqref{eq:integral-eqz} is in fact equal to $(\frac{\partial \vu_j(t,a^s)}{\partial a^s_{i}})_{j=1}^{M}$. This will be accomplished using standard techniques (see, e.g., \cite{coddington1955theory} Ch. 13).
Let $a\in \R^{n_s}$ and $h>0$. 
$$
\vq(t,a,h) \coloneqq \frac{1}{h}(\vu(t,a+he_i) - \vu(t,a)).
$$
Using \eqref{eq:integral-eq0} we have
\begin{multline} \label{eq:lemma1-eq-q}
\vq(t,a^s,h)=
V^s(t,t_0)e_i \\
+ \int_{t_0}^t V^s(t,\tau)[ D_x \tilde F(\vu(\tau,a^s),\tau)\vq(t,a^s,h) + \Delta]\\
- \int_{t_0}^t V^u(t,\tau)[ D_x \tilde F(\vu(\tau,a^s),\tau)\vq(t,a^s,h) + \Delta]
\end{multline}
where $\Delta = \frac{1}{h}\left[ \tilde F(\vu(\tau,a^s+he_i),\tau) - \tilde F(\vu(\tau,a^s),\tau) \right] - D_x \tilde F(\vu(\tau,a^s),\tau)\vq(t,a^s,h)$.

Let $K$ and $\sigma$ be as in \eqref{eq:ex-estimate-s}.
Using \eqref{eq:Lip-F} we see that for any $\eta>0$ we may choose a sufficiently small neighborhood of the origin such that $|\Delta|<2K\eta$ for all $a^s$ in the neighborhood. Let $\e>0$ be such that $\frac{2K\e}{\sigma} < \frac{1}{2}$.
Using \eqref{eq:lemma1-eq-q} and \eqref{eq:integral-eqz} and letting $m(h) = \sup_{t\geq t_0} \|\vz(t,a^s) - \vq(t,a^s,h)\|$ we have
\begin{align}
m(h) \leq & \, \e\int_{t_0}^t e^{-\sigma(t-\tau)} (m(h) + \|\Delta\|)\dtau\\
& -\e\int_{t}^\infty e^{\sigma(t-\tau)} (m(h) + \|\Delta\|)\dtau\\
 \leq & K\e \eta m(h) \frac{2}{\sigma} + 2K^2\eta \frac{2}{\sigma},
\end{align}
which implies that $m(h) \leq \frac{8K^2\eta}{\sigma}$.
Letting $\eta \to 0$ as $h\to 0$ we see that $m(h)\to 0$ as $h\to 0$, and hence $\vz$ is the desired derivative. 
Finally, the claim that $\frac{\partial \phi_j(t_0,0)}{\partial a_i^s}=0$ follows from \eqref{eq:z-to-zero}.
\end{proof}

\subsection{Proof of Theorem \ref{thrm:non-convergence-saddles}}
Theorem \ref{thrm:non-convergence-saddles} follows readily from Theorem \ref{thrm:main-continuous}. This follows from the fact that under Assumptions
\ref{a:G-connected-undirected} and \ref{a:step-size-CT}--\ref{a:eigvec-continuity}, DGF \eqref{dynamics_CT} is a special case of the general ODE \eqref{eq:ODE1} under Assumptions \ref{a:h-C2}--\ref{a:eigvec-continuity-h} and the assumption that $\gamma_t\to\infty$. Note that the relationship between \eqref{dynamics_CT} and \eqref{eq:ODE1} is made precise in Section \ref{sec:general-setup} (see, in particular, Section \ref{sec:DGD-special-case}). Note also that in this context, if $x^*\in \R^d$ is a saddle point of \eqref{eqn:f-def} and $\tilde x = (x^*,\ldots,x^*)\in\R^{Nd}$ is the $N$-fold repetition, then $\nabla f(x^*)$ corresponds to  $\nabla h\vert_\C(\tilde x)$.\footnote{To simplify notation for the proofs we have set the argument of $h\vert_\C$ to be an element of $\R^{Nd}$. However, modulo this minor abuse of notation, $h\vert_\C$ and \eqref{eqn:f-def} do coincide in this case.}

\subsection{Example and Computation of the Stable Manifold}
In order to illustrate the stable manifolds constructed in the paper, consider an example where 
$$
h(x) = \frac{1}{2}\left(x_1^2 - x_2^2  + x_1^2x_2 + x_1x_2^2\right)(1+x_3) + x_3
$$
and let the constraint space be given by $\calC = \{x\in \R^3:x_3=0\}$. In this example the function has been aligned to the coordinate axis so that $x_3$ plays the role of the off-constraint component while $x_1$ and $x_2$ are the in-constraint components.
A plot of the gradient vector field for $h\vert_{\calC}$ is shown in Figure \ref{fig:vec-field-C}.
\begin{figure}[h] 
\includegraphics[width=8cm]{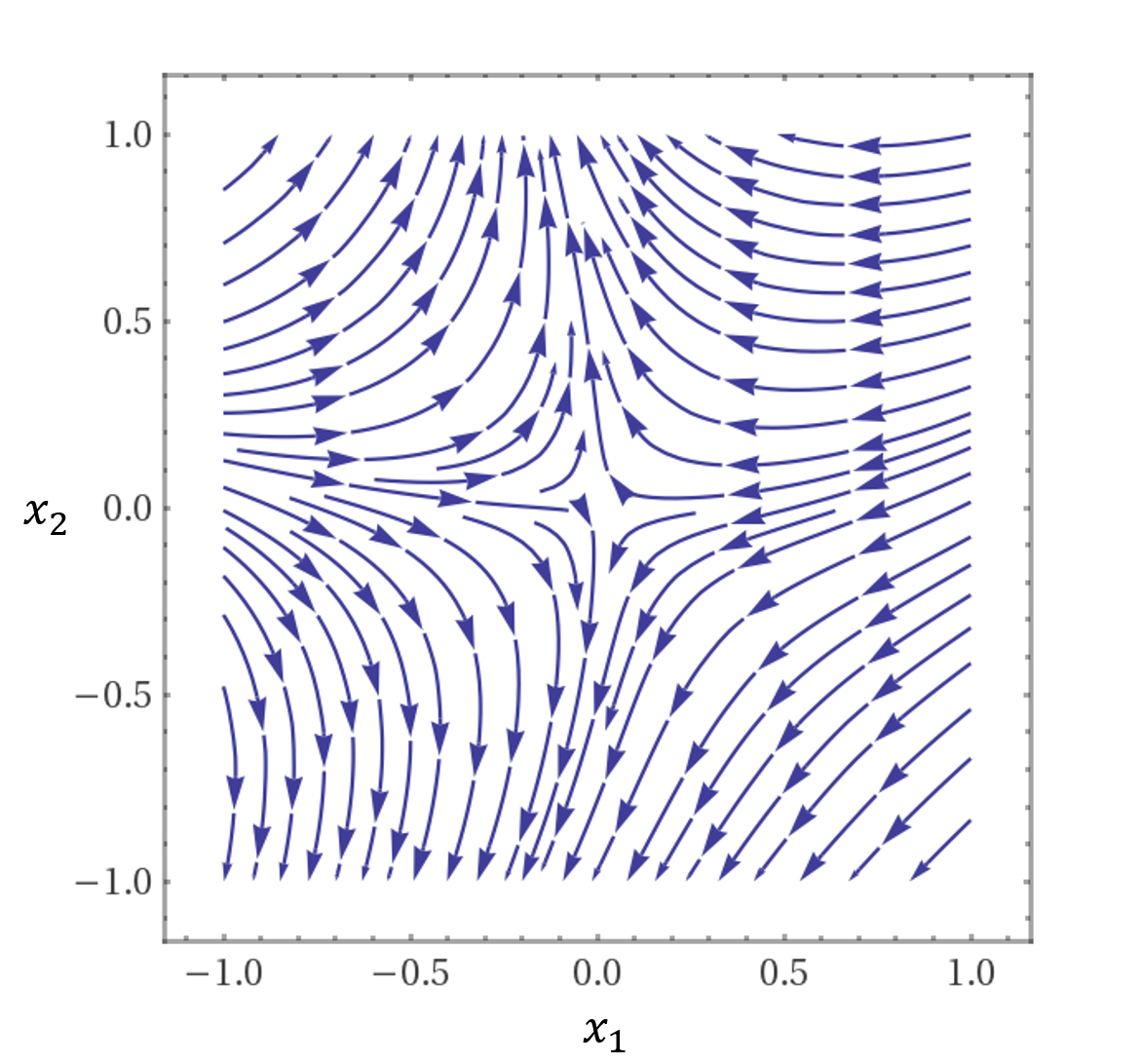}
\caption{}
\label{fig:vec-field-C}
\centering
\end{figure}

The construction of $h$ may be intuitively understood as follows. Observe that the quadratic part of $h$ consists simply of $x_1^2 - x_2^2$. From here, $h$ is constructed by adding higher order terms in order to ``bend'' the stable (and unstable) manifolds, then multiplying by $(1+x_3)$ to warp the vector field away from the constraint space $\calC$, and then finally adding $x_3$ so that $(0,0,0)$ is not an equilibrium of the unconstrained system. 

A plot of  $\calS_{t_0}$ with $t_0=1$ is shown in Figure \ref{fig:stable-man1}.
The stable manifold in Figure \ref{fig:stable-man1} was computed via a Picard-type iteration, i.e., iteratively evaluating the integral equation \eqref{eq:integral-eq0} to obtain the solution $t\mapsto \vu(t,a_s)$ for various values of stable coordinates pairs $a_s = (x_1,x_3)$. The stable manifold is then computed using \eqref{eq:psi-S-def}. As a matter of practical consideration, note that the integral equation \eqref{eq:integral-eq0} is defined with respect to a coordinate change. Care must be taken to ensure that the inputs and outputs of this computation respect this coordinate change. 

We note that here we chose a simple example where $A(t)$, defined in \eqref{eq:A-matrix-def}, is always diagonal, so that the rotation matrix $U(t)$ defined in \eqref{def:Lambda-t} is always the identity. Consequently, this example has no rotational component, and hence the stable manifold here does not exhibit any ``twisting'' behavior as $x_3$ departs from 0. However, twisting behavior can occur in more general examples (particularly, when $U(t)\not= I$). 

It is also worth noting that $\calS_{t_0}$, $t_0=1$ is similar to the stable manifold for two dimensional $\calC$-constrained system visualized in Figure \ref{fig:vec-field-C}, but extrapolated into the $x_3$ dimension. This relationship is not exact, but it can be shown that $\calS_{t_0}$ does converge to an extrapolation of the two-dimensional manifold as $t_0\to \infty$ (see \cite{SMKP2020saddles}, Section 7).

\begin{figure}[h] 
\hspace{.5em}
\includegraphics[width=8cm]{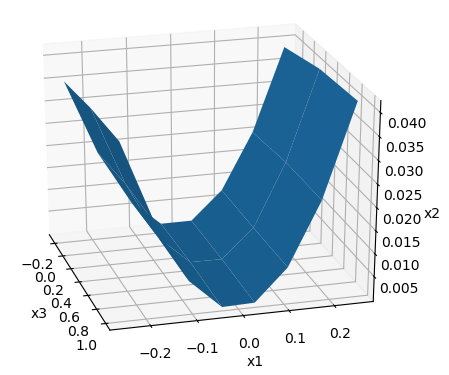}
\caption{}
\label{fig:stable-man1}
\centering
\end{figure}

\section{Conclusions} \label{sec:conclusions}
The paper considered DGF, a multi-agent algorithm for optimizing a distributed sum-over-agents objective. The paper studied convergence to critical points when objectives are permitted to be nonconvex and nonsmooth. In order to make sure that DGF is well-defined in this setting we assume that objectives are Lipschitz continuous and we defined DGF with respect to the generalized gradient. The paper also considered the problem of showing nonconvergence to saddle points In DGF. To handle this problem, the paper assumed that that functions are locally smooth near the saddle and proved the existence of a stable manifold for DGF. We then concluded a.s. convergence to local minima when all saddle points are regular. This paper has focused on continuous-time methods. Discrete-time (stochastic) DGD is treated in the companion paper \cite{SMKP2020saddles}.

\appendix
\section{}

\begin{lemma} [$\calS$ contains all stable initializations] \label{lemma:stable-in-S}
Let $\e$, $r$, and $T$ be chosen as in the construction of $\calS$. Let $a^s\in \R^K$, with $|a^s|< r/3$, let $t_0\geq T$ and suppose that $\vz$ is a solution to \eqref{eq:ODE-z} with $\vz_i(t_0,a^s) = a_i^s$, $i=1,\ldots,k$. If $\vz(t,a^s)\to 0$ as $t\to\infty$ then $(t_0,y_0)\in \calS$.
\end{lemma}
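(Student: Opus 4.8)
The plan is to prove the converse inclusion by the standard Perron device: I would show that \emph{any} solution $\vz$ of \eqref{eq:ODE-z} that decays to the origin must itself satisfy the integral equation \eqref{eq:integral-eq0}, and then invoke the uniqueness of continuous solutions to that equation. Writing $\vz = (\vz^s,\vz^u)$ according to the stable/unstable splitting \eqref{eq:Lambda-decomposition} of $\Lambda(t)$, and abbreviating the forcing term by $G(\tau) := \tilde F(\vz(\tau),\tau) - U(\tau)g'(\gamma_\tau)\dot\gamma_\tau$, I would apply the variation-of-constants formula using the diagonal transition matrices $V^s$, $V^u$ of \eqref{eq:V-def}.

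For the stable block I would integrate forward from $t_0$, which gives
$$
\vz^s(t) = V^s(t,t_0)\begin{pmatrix} a^s\\ 0\end{pmatrix} + \int_{t_0}^t V^s(t,\tau)\,G(\tau)\,d\tau,
$$
matching the first two terms of \eqref{eq:integral-eq0} with $a^s = \vz^s(t_0)$. For the unstable block I would instead integrate \emph{backward}: for any $t' > t$,
$$
\vz^u(t) = V^u(t,t')\,\vz^u(t') - \int_t^{t'} V^u(t,\tau)\,G(\tau)\,d\tau,
$$
and then let $t'\to\infty$. Since $\vz(t')\to 0$ by hypothesis while $\|V^u(t,t')\|\le K e^{\sigma(t-t')}\to 0$ by \eqref{eq:ex-estimate-u}, the boundary term vanishes; the remaining integral converges because $G$ is bounded on $[t,\infty)$ (using the Lipschitz bound \eqref{eq:Lip-F} on $\tilde F$ together with $\vz\to 0$, and the finite arc-length estimate \eqref{eq:g-bdd-arc-length}, which yields $\int_{t_0}^\infty |U(\tau)g'(\gamma_\tau)\dot\gamma_\tau|\,d\tau<\infty$ after a change of variables). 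This produces exactly the third term of \eqref{eq:integral-eq0}, so $\vz$ solves the integral equation with parameter $a^s=\vz^s(t_0)$.

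With this identity in hand, I would appeal to the uniqueness of the continuous solution of \eqref{eq:integral-eq0} established by the contraction argument following that equation: since both $\vz$ and $\vu(\cdot,(t_0,a^s))$ solve the same integral equation and both decay, they coincide. In particular their unstable coordinates agree at $t_0$, so $\vz^u(t_0) = \psi(t_0,a^s)$, whence $(t_0,\vz(t_0)) = (t_0, a^s, \psi(t_0,a^s)) \in \calS$, as claimed. The main obstacle I anticipate is the backward-integration step: one must check that $\vz$ remains in the ball $B_r(0)$ on which \eqref{eq:Lip-F} is valid (so that $G$ is genuinely controlled), and verify carefully that the decay of $\vz$ dominates the backward growth of $V^u(t,t')$ so that the boundary term truly vanishes; tracking the integrability of the drift $U(\tau)g'(\gamma_\tau)\dot\gamma_\tau$ is precisely where \eqref{eq:g-bdd-arc-length} is indispensable.
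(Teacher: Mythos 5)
Your proposal is correct and follows essentially the same route as the paper: both reduce the decaying solution to the integral equation \eqref{eq:integral-eq0} via variation of constants and then invoke uniqueness of its continuous solution to conclude $(t_0,\vz(t_0))\in\calS$. The only cosmetic difference is that you kill the unstable part by sending the backward-integration endpoint $t'\to\infty$ and watching the boundary term vanish, whereas the paper packages the same information into a constant $c$ and argues its unstable components must be zero because $V^u(t,t_0)c$ would otherwise blow up against a bounded left-hand side.
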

\begin{proof}
By variation of constants we see that
\begin{align} \label{eq:integral-eq}
\vz(t) \coloneqq & V^s(t,t_0)\vz(t_0) + V^u(t,t_0)c\\
& +  \int_{t_0}^t V^s(t,\tau)\left(\tilde F(\vz(\tau),\tau)  -U(\tau)g'(\tau)\dot\gamma_\tau \right)\,d\tau\\
& - \int_{t}^\infty V^u(t,\tau)\left(\tilde F(\vz(\tau)) -U(\tau)g'(\tau)\dot\gamma_\tau \right)\,d\tau,
\end{align}
where $c = \vz(t_0) + \int_{t_0}^\infty V^u(t_0,\tau)\left( \tilde F(\vz(\tau)) -U(\tau)g'(\tau)\dot\gamma_\tau \right)\dtau$. Note that integral in $c$ converges by \eqref{eq:V-def} and the fact that $\int_{t_0}^\infty U(\tau)g'(\tau)\dot\gamma_\tau\dtau < \infty$.
Every term on the right hand side of \eqref{eq:integral-eq} is uniformly bounded in $t$, except possibly the term $V^u(t,t_0)c$. In particular, if $c_j\not = 0$, $j> k$, then $|V^u(t,t_0)c|\to\infty$.
Since the left hand side of \eqref{eq:integral-eq} is bounded uniformly in time, it follows that the right hand side is likewise bounded and thus all $c_j$, $j>k$ must be zero and hence $V^u(t,t_0)c = 0$.

This implies that $\vu(\cdot,a^s) = \vz$ is a solution to the integral equation \eqref{eq:integral-eq0} given $a^s$.
In the proof of Lemma \ref{eq:integral-eq0} we saw that
$\vu(t,a^s)$ is the unique continuous solution of \eqref{eq:integral-eq0} given $a^s$. By the definitions of $\calS$ and $\psi$ we thus see that $(t_0,z_0)\in \calS$.
\end{proof}

The following lemma characterizes the asymptotic properties of the linearization of \eqref{eq:ODE1} near saddle points.
\begin{lemma} \label{lemma:eigvalue-convergence}
Let $A(t)$ be given by \eqref{eq:A-matrix-def} and let $B$ be given by \eqref{def:B}
Let $\{\lambda_1(t),\ldots,\lambda_M(t)\}$ and $\{\lambda_1,\ldots,\lambda_d\}$ denote the eigenvalues of $A(t)$ and $B$ respectively, and assume that $\lambda_i(t) \leq \lambda_j(t)$, $i< j$, and likewise for 
$\lambda_i$, $i=1,\ldots,d$. 
Then $\lambda_i(t)\to\lambda_i$, $i=1,\ldots,d$, and $\lambda_i(t)\to-\infty$, $i=d+1,\ldots,M$.
\end{lemma}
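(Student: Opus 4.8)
The plan is to treat the conclusion as an asymptotic eigenvalue problem for the symmetric block matrix $A(t)$, splitting its spectrum into a part that diverges to $-\infty$ and a part that stabilizes. From \eqref{eq:A-matrix-def} we have $A(t) = -\Hess h(g(\gamma_t)) - \gamma_t Q$ (the penalty contributes a fixed positive multiple of $\gamma_t Q$, whose exact value is immaterial). By Lemma \ref{lemma:g-existence}, $g(\gamma_t)\to 0$, and since $h$ is $C^2$ near $0$ (Assumption \ref{a:h-C2}) the matrix $\tilde H(t) := \Hess h(g(\gamma_t))$ converges to $H := \Hess h(0)$. Working in the coordinates of \eqref{eq:Q-diag-form}, where $\calC = \myspan\{e_1,\dots,e_d\}$ and $Q = \diag(0,Q_{nc})$ with $Q_{nc}$ positive definite, I would record the two facts that drive the argument: for $v\in\calC$ both the penalty and the off-diagonal coupling drop out, so $v^\T A(t)v = v^\T(-\tilde H_{cc}(t))v$ with $-\tilde H_{cc}(t)\to -H_{cc} = B$ by \eqref{def:B}; while for $v$ in the complement $\calC^\perp := \myspan\{e_{d+1},\dots,e_M\}$ one has $v^\T A(t)v \le C - \gamma_t\lambda_{\min}(Q_{nc})\|v\|^2 \to -\infty$, where $C$ bounds $\|\tilde H(t)\|$.

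For the divergent eigenvalues I would feed the test subspace $\calC^\perp$ (dimension $M-d$) into the Courant--Fischer min--max characterization, which shows that the $(M-d)$-th smallest eigenvalue, and hence the $M-d$ smallest eigenvalues, tend to $-\infty$. The remaining $d$ eigenvalues are bounded above by $C$ (since $A(t)\preceq -\tilde H(t)$) and bounded below by testing against $\calC$; thus exactly $M-d$ eigenvalues diverge and exactly $d$ remain in a fixed compact set.

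The delicate part is to show the surviving $d$ eigenvalues converge to $\operatorname{spec}(B)$ \emph{with the correct multiplicities}, which I would handle by counting eigenvalues below a threshold via a Schur-complement congruence. Fix any $c\in\R$ that is not an eigenvalue of $B$; the number of eigenvalues of $A(t)$ below $c$ equals the negative inertia of $A(t)-cI$. Writing $A(t)-cI$ in block form, the $\calC^\perp$ block $S(t) := -\tilde H_{nc,nc}(t) - \gamma_t Q_{nc} - cI$ is negative definite for large $t$ with $\|S(t)^{-1}\| = O(1/\gamma_t)$, so $A(t)-cI$ is congruent to $\operatorname{blockdiag}\big(-\tilde H_{cc}(t) - cI - R(t)S(t)^{-1}R(t)^\T,\, S(t)\big)$, where $R(t)$ is the bounded off-diagonal block. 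By Sylvester's law of inertia the negative inertia of $A(t)-cI$ is the sum of those of the two blocks: the second contributes $M-d$, and since the correction $R(t)S(t)^{-1}R(t)^\T = O(1/\gamma_t)\to 0$ the first block converges to $B-cI$, which is nonsingular as $c\notin\operatorname{spec}(B)$, hence has negative inertia $\#\{k:\lambda_k<c\}$ for large $t$. Thus eventually exactly $(M-d)+\#\{k:\lambda_k<c\}$ eigenvalues of $A(t)$ lie below $c$. Letting $c$ sweep just below and just above each eigenvalue of $B$ pins the limiting multiplicity at each $\lambda_k$ and yields $\lambda_{M-d+k}(t)\to\lambda_k$ for $k=1,\dots,d$; with the increasing ordering the divergent eigenvalues carry the smallest indices, and the stated limits are attained by the top $d$ eigenvalues.

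The main obstacle is precisely ruling out a Schur-complement correction to the surviving eigenvalues, i.e. showing the off-diagonal coupling $\tilde H_{c,nc}(t)$ does not shift the limits away from $\operatorname{spec}(B)$. This is exactly where $\gamma_t\to\infty$ is essential: it forces $S(t)$ to be negative definite of order $\gamma_t$, so $\|S(t)^{-1}\| = O(1/\gamma_t)$ and the coupling enters only at order $1/\gamma_t$. Routing this through the inertia count, rather than a direct min--max upper bound (which is lossy when $\lambda_k<0$, since testing against $\calC^\perp\oplus W_k$ only controls the compressed quadratic form up to its top eigenvalue), simultaneously delivers convergence with the correct multiplicities, which is exactly what the calling argument in Lemma \ref{lemma:stable-man-exist} requires to count stable versus unstable directions.
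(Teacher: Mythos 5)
Your proof is correct, and it takes a genuinely different route from the paper. The paper disposes of this lemma in one line by appealing to ``continuity of eigenvalues as a function of matrices'' with a citation to Kato; since $\|A(t)\|\to\infty$, that appeal implicitly requires a rescaling or a reduction argument that the paper does not spell out. You instead give a self-contained linear-algebra argument: Courant--Fischer on the test subspace $\calC^\perp$ to force $M-d$ eigenvalues to $-\infty$ and to trap the remaining $d$ in a compact set, followed by a Schur-complement congruence and Sylvester's law of inertia to count eigenvalues below each non-critical threshold $c$ and thereby pin down the limits \emph{with multiplicities}. The inertia step is exactly the right tool here: it is where $\gamma_t\to\infty$ enters, since $\|S(t)^{-1}\|=O(1/\gamma_t)$ makes the off-diagonal coupling $\tilde H_{c,nc}$ contribute only an $O(1/\gamma_t)$ correction to the constrained block, so the surviving spectrum converges to $\operatorname{spec}(-H_{cc})=\operatorname{spec}(B)$ and not to something shifted by the coupling. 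Your argument also reveals that Assumption \ref{a:eigvec-continuity-h}, which the paper invokes in its proof, is not actually needed for this lemma (only eigenvalue continuity is used, not eigenvector continuity). Two minor points: the lemma as stated indexes the divergent eigenvalues as $i=d+1,\ldots,M$, which is inconsistent with the increasing ordering it imposes (under that ordering the divergent eigenvalues carry the smallest indices); you handle this correctly by assigning the finite limits to the top $d$ eigenvalues. And the Hessian of $\gamma_t x^\T Q x$ is $2\gamma_t Q$ rather than $\gamma_t Q$, but as you note the constant is immaterial to every estimate.
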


\begin{proof}
This follows by the continuity of eigenvalues as a function of matrices which holds under Assumptions \ref{a:h-C2} and \ref{a:eigvec-continuity-h} (see e.g. \cite{katoBook}, p. 110).
\end{proof}

\bibliographystyle{IEEEtran}
\bibliography{myRefs}

\end{document}